\documentclass[12pt,letterpaper]{amsart}

\usepackage[none]{hyphenat}
\usepackage[utf8]{inputenc}
\usepackage[T1]{fontenc}
\usepackage{amsfonts}
\usepackage[leqno]{amsmath}
\usepackage{amssymb, comment, float}
\usepackage[dvipsnames]{xcolor}
\usepackage[foot]{amsaddr}
\usepackage[shortlabels]{enumitem}
\usepackage{mathdots}
\usepackage{todonotes}
\usepackage[footskip=0.5in, headheight = 0.5in, top=1.25in, bottom=1.25in, right=1in, left=1in]{geometry}
\usepackage{hyperref}
\usepackage[center]{caption}
\usepackage{eufrak}
\usepackage{marvosym}
\usepackage{latexsym}
\usepackage{tikz}
\usepackage{subfig}
\usepackage{thm-restate}

\hypersetup{
colorlinks,
linkcolor=blue,
urlcolor=black,
citecolor=blue,}

\newtheorem{theorem}{Theorem}[section]
\newtheorem{lemma}[theorem]{Lemma}

\newtheorem{observation}[theorem]{Observation}

\def\dd{\hbox{-}}

\newcommand{\poi}{\mathbb N}

\newcounter{tbox}
\newcommand{\sta}[1]{\medskip\medskip\refstepcounter{tbox}\noindent{\parbox{\textwidth}{(\thetbox) {\em #1}}}\vspace*{0.3cm}}
\newcommand{\mylongtitle}[1]{%
	\ifodd\value{page}%
		\protect\parbox{0.97\linewidth}{#1}\hfill%
	\else%
		\hfill\protect\parbox{0.97\linewidth}{#1}%
	\fi%
}

\title[Finding non-bisimplicial and triad-free minimal separators]{Finding non-bisimplicial and triad-free minimal separators}

\author{Maria Chudnovsky$^\dagger$}
\author{Ilya Maier$^{\ddagger}$}

\thanks{$^\dagger$ Princeton University, Princeton, NJ, USA.
	Supported by NSF Grant DMS-2348219, AFOSR grant FA9550-25-1-0275 and a Guggenheim Fellowship}
\thanks{$^{\ddagger}$ Princeton University, Princeton, NJ, USA.
	Supported by a scholarship of the German Academic Exchange Service (DAAD)}

\date{\today}
\begin{document}
\raggedbottom
\maketitle

\begin{abstract}

	A {\em $u\dd v$ separator} in $G$ is a set $S\subseteq V(G)\setminus\{u,v\}$ such that there is no $u\dd v$ path in $G\setminus S$.
	A {\em minimal separator} in $G$ is a set $S$ such that for some $u,v\in V(G)\setminus S$, $S$ is a $u\dd v$ separator in $G$ but no proper subset of $S$ is.
	A set $S$ is {\em bisimplicial} if it is the union of at most two cliques.
	Recognizing graphs in which every minimal separator is the union of at most $k$ cliques is polynomial-time solvable for $k=1$ and NP-hard for every fixed $k\geq 3$.
	We settle the remaining case $k=2$ by giving a polynomial-time algorithm to decide whether every minimal separator in a graph is bisimplicial.
	
	A set $S$ is {\em triad-free} if its independence number $\alpha(S)$ is at most two.
	Given a graph $G$ and two vertices $u,v$, the problem of finding a $u\dd v$ separator $S$ with $\alpha(S)\leq k$ had the same complexity gap: it was known to be polynomial-time solvable for $k=1$ and NP-hard for every fixed $k\geq 3$.
	We resolve the remaining case $k=2$ by giving a polynomial-time algorithm that finds a triad-free $u\dd v$ separator or determines that none exists.

\end{abstract}

\section{Introduction}
\label{sec:introduction}

The set of all positive integers is denoted by $\poi$.
Graphs in this paper have finite vertex sets, no loops, and no parallel edges.
Let $G$ and $H$ be two graphs.
$G$ is said to \textit{contain} $H$ if $G$ contains an induced subgraph isomorphic to $H$.
$G$ is \textit{$H$-free} if $G$ does not contain $H$.
For $X \subseteq V(G)$, we denote by $G[X]$ the subgraph of $G$ induced by $X$, and we refer to $X$ and $G[X]$ interchangeably.
By $\alpha(X)$, we denote the size of the largest stable set in $G[X]$.
For disjoint subsets $X, Y \subseteq V(G)$, we say that $X$ and $Y$ are \textit{anticomplete} (or that $X$ {\em is anticomplete to} $Y$) if there is no edge in $G$ with an end in $X$ and an end in $Y$.
Similarly, we say that $X$ and $Y$ are \textit{complete} (or that $X$ {\em is complete to} $Y$) if $xy\in E(G)$ for every pair $x\in X,y\in Y$.

An \textit{induced minor} of $G$ is a graph obtained from an induced subgraph of $G$ by repeatedly contracting edges and deleting all arising loops and parallel edges.
For a graph $H$, we say that $G$ {contains \em $H$ as an induced minor} if there is an induced minor of $G$ isomorphic to $H$.
We say that $G$ is {\em $H$-induced-minor-free} if $G$ does not contain $H$ as an induced minor.
For standard graph-theoretic terminology, the reader is referred to \cite{Diestel}.

Let $G$ be a graph.
We use $G^C$ to denote the {\em complement} of $G$.
A {\em path} in $G$ is a set of vertices $P\subseteq V(G)$ such that $G[P]$ is a path.
An {\em antipath} in $G$ is a set of vertices $A\subseteq V(G)$ such that $G^C[A]$ is a path.
An {\em antihole} in $G$ is a set of vertices $A\subseteq V(G)$ such that $G^C[A]$ is a cycle of length at least 4.
We say that an antipath or an antihole $A$ is {\em odd} if $|A|$ is odd.
Analogously, an antipath or an antihole $A$ is {\em even} if $|A|$ is even.

A {\em $u\dd v$ separator} in $G$ is a set $S\subseteq V(G)\setminus\{u,v\}$ such that there is no $u\dd v$ path in $G\setminus S$.
A {\em minimal separator} in $G$ is a set $S$ such that for some $u,v\in V(G)\setminus S$, $S$ is a $u\dd v$ separator in $G$ but no proper subset of $S$ is.
We say that a separator is {\em odd} if it contains an odd antihole.
A {\em shortest odd minimal separator} is an odd minimal separator $S$ that contains an odd antihole $A$ such that no other odd minimal separator $S'$ contains an odd antihole of length less than $|A|$.
A set $S$ is {\em bisimplicial} if $S$ is the union of at most two cliques.
Finally, a set $S$ is {\em triad-free} if its independence number $\alpha(S)$ is at most two.

Let $\mathcal G_k$ denote the class of all graphs in which every minimal separator is the union of at most $k$ cliques.
The class $\mathcal G_1$ corresponds to the class of all chordal graphs and can be recognized in polynomial time \cite{chordal, recognizing-chordal}.
It is known that the problem of recognizing graphs in $\mathcal G_k$ is NP-hard for $k\ge 3$ \cite{bisimplicial_separators}.
The problem of recognizing graphs in $\mathcal G_2$ was posed as an open problem in \cite{bisimplicial_separators}.
We show that it can be solved in polynomial time:

\begin{restatable}{theorem}{main}
\label{thm:main}

	There exists a polynomial-time algorithm to decide if a graph $G$ is in $\mathcal G_2$.

\end{restatable}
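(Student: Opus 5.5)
A set $S$ is bisimplicial exactly when the complement $G^C[S]$ is bipartite, that is, when $G^C[S]$ has no odd cycle. An induced odd cycle in $G^C[S]$ of length at least $5$ is an odd antihole in $G$. A triangle in $G^C[S]$ is a stable set of size three, i.e.\ a triad. So $G\notin\mathcal G_2$ if and only if some minimal separator $S$ contains a triad or an odd antihole. The plan is to detect these two obstructions separately, which is why the paper introduces odd separators and shortest odd minimal separators.

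\emph{Triads.} We guess the triad $\{a,b,c\}$ (polynomially many choices) and ask whether it lies in a common minimal separator. A set $T$ lies in some minimal separator if and only if there are two components $C_1,C_2$ of $G\setminus N[\cdot]$ structure such that every vertex of $T$ has a neighbour in both. Concretely, we want two disjoint connected sets $C_1,C_2$, anticomplete to each other, each containing a neighbour of each of $a,b,c$, and avoiding $N[\cdot]$ appropriately. Given a full component $C_1$, we can take $S=N(C_1)$ restricted to a component on the other side, so $S$ is a minimal separator containing $T$. For $|T|=3$ this becomes a three-terminal connectivity problem: find a connected set in $G\setminus\{a,b,c\}$ meeting $N(a),N(b),N(c)$ whose closed neighbourhood leaves the three still jointly attached to a single component. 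I would handle this by guessing a vertex $x$ of $C_1$ and branching on which of $a,b,c$ have neighbours near $x$, then using a Steiner-tree-type argument for three terminals in the graph minus the closed neighbourhood of the other side. For three terminals this is polynomial, since a minimal such tree is a subdivided claw whose centre and three legs can be guessed and grown.

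\emph{Odd antiholes.} Odd antiholes can be long, so we cannot guess them. Instead we consider a shortest odd minimal separator $S$ with its shortest odd antihole $A$, and we may assume no minimal separator contains a triad. Then $\alpha(S)\le 2$, so $A$ is an odd antihole in a triad-free set. Minimality of $|A|$ among all minimal separators should give strong structure: for example, vertices of $S\setminus A$ cannot create shorter odd antiholes with $A$, and the two full components $C_1,C_2$ attach to $A$ in a restricted way. The aim is to show that $A$ is determined by a bounded number of its vertices (say three consecutive vertices of the complementary cycle plus a few attachment vertices in $C_1,C_2$). The rest is then reconstructed greedily, for instance as a shortest path in the complement of a suitably restricted graph, by analogy with how odd holes are found via shortest-hole arguments. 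Once those vertices are guessed, we test whether the reconstructed $A$ lies in a common minimal separator using the full-component criterion above. Because we only need \emph{some} minimal separator containing an odd antihole, correctness requires only that the shortest one is always found.

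The main obstacle is the odd antihole case, namely proving the structural lemma that a shortest odd antihole in a triad-free minimal separator can be recovered from a bounded guess. I would first try a shortest-path-in-complement reconstruction between two guessed consecutive vertices. I would argue by minimality that any shortcut yields a shorter odd antihole in a (possibly different) minimal separator, using the triad-freeness to control the neighbourhoods of shortcut vertices.
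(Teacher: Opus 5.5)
Your reduction of $G\notin\mathcal G_2$ to ``some minimal separator contains a triad or an odd antihole'' is correct. However, neither of your two detection steps is an actual polynomial-time algorithm, so there is a gap in both halves.

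\textbf{Triads.} Your reformulation is fine: by Lemma~\ref{lem:subgraph-separator}, a stable set $\{a,b,c\}$ lies in a minimal separator if and only if $G\setminus\{a,b,c\}$ has two disjoint, anticomplete connected sets, each containing a neighbour of each of $a,b,c$. Deciding this is exactly deciding whether $G$ contains $K_{2,3}$ as an induced minor (Lemma~\ref{lem:separator-alpha-2}), and your Steiner-tree sketch does not decide it. Finding one connected set meeting $N(a),N(b),N(c)$ is trivial. The difficulty is that the second set must avoid $N[C_1]$, which depends on which $C_1$ you chose. A minimal $C_1$ is a path or a subdivided claw, but its legs have unbounded length, so they cannot be guessed in polynomial time. ``Grown'' does not say how to choose legs that leave room for $C_2$. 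This is a genuinely hard induced-disjointness problem. The paper simply invokes the nontrivial algorithm of Theorem~\ref{thm:k23-im} together with Lemma~\ref{lem:separator-alpha-2}, and you should do the same.

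\textbf{Odd antiholes.} You state the needed structural lemma only as an aim. The proposed route, rebuilding $A$ as a shortest path in the complement between guessed vertices, is unlikely to work. Even with no separator constraint, finding a shortest odd hole requires the Chudnovsky--Scott--Seymour machinery to handle major vertices. Here $A$ must moreover lie in $N(C_1)\cap N(C_2)$ for full components you do not know. Your minimality argument also needs every shortcut vertex to form a shorter odd antihole inside some minimal separator, and this holds only in special configurations (compare the proof of Lemma~\ref{lem:long-stretch}).

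The missing idea is to reconstruct a full component rather than the antihole:
\begin{enumerate}
\item By triad-freeness and minimality, every $s\in S$ has at most two non-neighbours in $A$ (Lemma~\ref{lem:every-s-is-small}). So vertices big for $A$ automatically lie outside $S$, and $B(A')$ can be computed without knowing $S$.
\item A minimal connected subset of a full component that is full to $A$ contains at most six small vertices (Lemma~\ref{lem:at-most-6-small}).
\item By the stretch analysis (Lemma~\ref{lem:cover-big-vertices}), the big vertices on one side are already big for some $A'\subseteq A$ with $|A'|\le 22$. The exception is when two vertices on each side dominate $A$; then guessing those four vertices and taking a shortest odd antihole of $N(\{l_1,l_2\})\cap N(\{r_1,r_2\})$ via Theorem~\ref{thm:find-shortest-odd} gives a suitable $A$.
\item Enumerate $A'$ and at most six small vertices $L$. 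Take a component $C_1$ of $G[B(A')\cup L]$ and a component $C_2$ of $G\setminus N[C_1]$. This yields a minimal separator $N(C_1)\cap N(C_2)$, which is then tested for an odd antihole by odd-hole detection in the complement.
\end{enumerate}
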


The proof outline of Theorem \ref{thm:main} is as follows.
As a first step, we test whether every minimal separator $S$ in $G$ is triad-free.
Using the following lemma, this is equivalent to testing whether $G$ is $K_{2,3}$-induced-minor-free.

\begin{lemma}[Dallard, Milanič, Štorgel \cite{separators}]
\label{lem:separator-alpha-2}
	
	Let $G$ be a graph.
	Then, every minimal separator $S$ in $G$ is triad-free if and only if $G$ is $K_{2,3}$-induced-minor-free.
	
\end{lemma}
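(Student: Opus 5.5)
The plan is to prove the two directions separately. In both, we use the standard characterization of minimal separators. A set $S$ is a minimal separator if and only if $G\setminus S$ has at least two \emph{full components}, that is, components $C$ with $N(C)=S$. Here a minimal $u\dd v$ separator $S$ has the components of $u$ and of $v$ as full components.

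\emph{Direction ($\Leftarrow$).} Suppose some minimal separator $S$ contains a stable set $\{a,b,c\}$. We will exhibit $K_{2,3}$ as an induced minor. Take two full components $C_1,C_2$ of $G\setminus S$, and consider the induced subgraph $G[C_1\cup C_2\cup\{a,b,c\}]$. Contract $C_1$ to a single vertex $x$ and $C_2$ to a single vertex $y$; both are connected, so this is legitimate. Since $C_1,C_2$ are full, each of $x,y$ is adjacent to all of $a,b,c$. Since $C_1$ and $C_2$ are distinct components of $G\setminus S$, $x$ and $y$ are nonadjacent. Since $\{a,b,c\}$ is stable, the result is exactly $K_{2,3}$.

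\emph{Direction ($\Rightarrow$).} Suppose $G$ contains $K_{2,3}$ as an induced minor. Then $V(G)$ contains pairwise disjoint connected sets $X_1,X_2,Y_1,Y_2,Y_3$ with the following properties:
\begin{itemize}
\item each $X_i$ touches each $Y_j$ (there is an edge between them);
\item $X_1$ is anticomplete to $X_2$;
\item the $Y_j$ are pairwise anticomplete.
\end{itemize}
Among all such models we choose one minimizing $\sum|Y_j|$, and additionally with $|X_1|+|X_2|$ minimal. The goal is to find a minimal separator containing three pairwise nonadjacent vertices.

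Let $Z=\bigcup Y_j$. We take $S_0=N(X_1)$, restricted so that it separates $X_1$ from $X_2$; as a concrete choice, $S_0=N(C)$ where $C$ is the component of $G\setminus N(X_1)$ containing $X_2$. We then shrink it to a minimal $X_1\dd X_2$ separator $S$, that is, one with full components $D_1\supseteq X_1$ and $D_2\supseteq X_2$. This can be done with the standard lemma: for a connected set $A$ and a component $C$ of $G\setminus N[A]$, the set $N(C)$ is a minimal separator with full components $C$ and the component containing $A$.

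Each $Y_j$ is a connected set touching both $D_1$ and $D_2$ but disjoint from them... this is not automatic, so instead we argue as follows. Each $Y_j$, together with $X_1$ and $X_2$, gives a path from $X_1$ to $X_2$, so $S$ must meet each $Y_j$. Pick $y_j\in S\cap Y_j$. The $Y_j$ are pairwise anticomplete, so $\{y_1,y_2,y_3\}$ is stable, and $S$ is not triad-free. For this we need $X_1,X_2$ to lie in different components of $G\setminus S$, which holds provided $S$ avoids $X_1\cup X_2$. The construction does give this: $S\subseteq N(X_1)$ is disjoint from $X_1$, and it is also disjoint from $X_2$ because $X_2$ is anticomplete to $X_1$.

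The main obstacle is making this separator construction precise, and the construction above resolves it. Contract $X_1$ to a single vertex $u$, which is allowed since $X_1$ is connected and contraction preserves separators appropriately. Alternatively, we can work directly: $A=X_1$ is connected, $X_2\cap N[X_1]=\emptyset$, $C$ is the component of $G\setminus N[X_1]$ containing $X_2$, and $S=N(C)\subseteq N(X_1)$. By the lemma, $S$ is a minimal separator whose full components contain $X_1$ and $X_2$. Each $Y_j$ meets both $N(X_1)$ and $N(X_2)$, so $X_1\cup Y_j\cup X_2$ is connected, and $S$ must hit $Y_j$. Choosing $y_j\in S\cap Y_j$ gives a stable set of size three in $S$, which completes the equivalence. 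The minimality assumptions on the model turn out to be unnecessary.
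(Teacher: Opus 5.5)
Your proof is correct. The paper gives no proof of this lemma; it imports it from Dallard, Milani\v{c} and \v{S}torgel. So there is no in-paper argument to compare against, and yours is the standard self-contained one.

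\begin{itemize}
\item For one direction, you take a triad $\{a,b,c\}$ in a minimal separator $S$ and two full components of $G\setminus S$. Contracting each component to a single vertex yields $K_{2,3}$.
\item For the other direction, you take a model $X_1,X_2,Y_1,Y_2,Y_3$ and let $C$ be the component of $G\setminus N[X_1]$ containing $X_2$. Then $S=N(C)$ is a minimal separator contained in $N(X_1)$ and disjoint from $X_1\cup X_2$, with $X_1$ and $X_2$ in distinct full components. Each connected set $X_1\cup Y_j\cup X_2$ must therefore meet $S$ inside $Y_j$. Choosing one vertex of $S\cap Y_j$ for each $j$ gives a triad.
\end{itemize}

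The same argument, unchanged, proves the general statement: every minimal separator has $\alpha(S)<q$ if and only if $G$ has no $K_{2,q}$ induced minor.

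In a write-up, delete the exploratory material: the set $S_0$ and its shrinking, the half-finished remark about $Y_j$ versus $D_1,D_2$, the aside about contracting $X_1$, and the minimality assumptions on the model. The direct argument in your final paragraph is complete without any of them.
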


The following theorem shows that this test can indeed be done in polynomial time.

\begin{theorem}[Dallard, Dumas, Hilaire, Milani\v{c}, Perez, Trotignon \cite{theta_induced_minor}]
\label{thm:k23-im}
	
	There exists a polynomial-time algorithm that decides if $G$ contains $K_{2,3}$ as an induced minor.
	
\end{theorem}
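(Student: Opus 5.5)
This is a polynomial-time algorithm for detecting $K_{2,3}$ as an induced minor. The natural route goes through the structure of induced minor models of $K_{2,3}$, i.e.\ theta-type configurations. $G$ contains $K_{2,3}$ as an induced minor if and only if $G$ contains an induced subgraph consisting of two vertex sets $X_1,X_2$ and three paths. More precisely, there are two connected sets (the ``branch sets'' of the degree-3 vertices) and three internally disjoint induced paths $P_1,P_2,P_3$ linking them, pairwise anticomplete apart from their ends. Contracting each branch set to a point makes these configurations generalized thetas. The first step is to reduce to a rooted version. We guess, in polynomially many ways, the vertices $a,b$ where the model is ``anchored'', together with the neighbours through which the three paths leave $a$ and $b$. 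We then ask whether there exist three pairwise anticomplete induced paths between the prescribed neighbourhoods. Without extra structure this is the three-in-a-tree / theta-detection type problem.

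The key step is to invoke the three-in-a-tree algorithm of Chudnovsky and Seymour. It decides in polynomial time whether three given vertices lie in a common induced tree. I would express the existence of a $K_{2,3}$ induced-minor model as follows. Fix a candidate vertex $a$ of the first branch set and three of its neighbours $x_1,x_2,x_3$ that are pairwise nonadjacent. In $G$ minus the closed neighbourhood of $a$, apart from $x_1,x_2,x_3$, we ask for an induced tree containing $x_1,x_2,x_3$. Such a tree yields a subdivided claw or triangle-type structure, which together with $a$ gives a theta or a prism/pyramid-like configuration. In each case we check that a $K_{2,3}$ induced minor results. This shows that whenever the test succeeds the minor exists. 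Conversely, a minimal model of $K_{2,3}$ can be normalized, by shrinking branch sets, so that one branch set is a single vertex $a$ with three pairwise nonadjacent neighbours on the three paths. The other branch set together with the path remainders then forms an induced tree containing $x_1,x_2,x_3$, anticomplete to $a$ elsewhere.

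I expect the normalization to be the main obstacle. A minimal induced-minor model need not have a singleton branch set. The branch set may be a path whose different vertices carry the three attachments, and the three attachment neighbours might be adjacent to each other, forming triangles. I would handle this by case analysis on the minimal model. If a branch set is not a single vertex, its minimal form is an induced path or a subdivided claw with the three attachments at its leaves. In that case we instead guess a short substructure, for example an edge or a triangle, in place of $a$. We also treat separately the case where the three paths' neighbours of $a$ are not stable, using the fact that a triangle $x_1x_2x_3$ near the first branch set still contracts correctly. With polynomially many guesses, each followed by one three-in-a-tree call, the whole procedure runs in polynomial time, which gives Theorem~\ref{thm:k23-im}.
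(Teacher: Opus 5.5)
\textbf{This proposal has genuine gaps in both directions.} The paper gives no proof of this statement; it cites Dallard et al., whose algorithm rests on a characterization of $K_{2,3}$-induced-minor-free graphs by excluded Truemper-type induced subgraphs. Your reduction, a bounded guess followed by one three-in-a-tree call, accepts graphs it should reject and misses graphs it should find.

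\emph{Soundness.} A minimal induced tree through $x_1,x_2,x_3$ need not be a subdivided claw with the $x_i$ as leaves. It can be a path with one terminal in its interior, and then $a$ together with the tree is only a cycle with a chord. Concretely, take the $6$-cycle $a x_1 y_1 x_2 y_2 x_3$ with the extra chord $a x_2$. Here $x_1,x_2,x_3$ are pairwise nonadjacent neighbours of $a$, and $G-a$ is the induced path $x_1y_1x_2y_2x_3$, so your test accepts. But the graph is outerplanar, so it has no $K_{2,3}$ minor at all. You can repair this by also guessing the tree-neighbour of each $x_i$ and deleting the other neighbours of $x_i$, so the $x_i$ are forced to be leaves. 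That is exactly the Chudnovsky--Seymour theta detector, though, and it finds only thetas.

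\emph{Completeness, the more serious gap.} Your normalization claim is false: a minimal model need not have a singleton branch set. Its weaker form also fails, namely that after a bounded guess on one side the rest of the model is an induced tree.
\begin{itemize}
\item Take a prism whose three paths $a_i$--$b_i$ all have length at least $2$. Contracting the triangles $a_1a_2a_3$ and $b_1b_2b_3$ gives $K_{2,3}$.
\item Yet no vertex of this graph has three pairwise nonadjacent neighbours, so your basic test never fires.
\item If you guess the triangle $a_1a_2a_3$ in place of $a$, every connected set containing its three path-neighbours must contain all of $b_1,b_2,b_3$. So three-in-a-tree correctly answers no.
\item A wheel whose centre has exactly four neighbours, forming two rim edges separated by nonempty segments, fails in the same way.
\end{itemize}
Whenever both branch sets are forced to contain triangles with the attachments, no single three-in-a-tree call can certify the model. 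Guessing bounded pieces at both ends instead leaves an induced-disjoint-paths problem of the kind that makes prism detection NP-complete in general. The missing ingredient is structural: you must show that these triangle-ended configurations can be detected in polynomial time in the graphs that survive the easy tests. That is the part the cited proof handles with its characterization and further structural work.
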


Let $\mathcal G_2'$ be the class of all graphs $G$ in which every minimal separator $S$ is triad-free but such that $G\notin \mathcal G_2$.
Note that this means that every graph $G\in \mathcal G_2'$ has an odd minimal separator.
Given a $K_{2,3}$-induced-minor-free graph $G$, we show how to find a shortest odd minimal separator in polynomial time, if one exists, which then implies Theorem \ref{thm:main}.

To do so, we find connected sets on opposite sides of a minimal separator whose neighborhoods contain a common odd antihole.
The structural lemmas allow us to find such sets by guessing a bounded number of vertices, even though the separator and the antihole may be large.

Our second result concerns the existence of separators between two prescribed vertices with bounded independence number.
For a fixed positive integer $k$, consider the problem of deciding whether a graph $G$ has a $u\dd v$ separator $S$ with $\alpha(S)\leq k$.
This problem is polynomial-time solvable for $k=1$, while Dallard, Fomin, Golovach, Korhonen, and Milanič \cite{alpha-separator-np-hard} proved that it is NP-complete for every fixed $k\geq 3$ and left the case $k=2$ open.
We settle this remaining case by giving a polynomial-time algorithm that finds a triad-free $u\dd v$ separator or determines that none exists.

\begin{restatable}{theorem}{triadfree}
\label{thm:triad-free-cutset}

	Let $u,v$ be two vertices in a graph $G$.
	There exists a polynomial-time algorithm that either finds a triad-free $u\dd v$ separator in $G$ or determines that none exists.
	
\end{restatable}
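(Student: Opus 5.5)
We may assume $u$ and $v$ are distinct and non-adjacent, since otherwise no $u\dd v$ separator exists. If a triad-free $u\dd v$ separator $S$ exists, then some subset of $S$ is a minimal $u\dd v$ separator, and it is still triad-free because $\alpha$ is monotone under taking subsets. So it suffices to search for a triad-free \emph{minimal} $u\dd v$ separator $S$. Let $C_u$ and $C_v$ be the components of $G\setminus S$ containing $u$ and $v$. Then $S=N(C_u)=N(C_v)$, and every vertex of $S$ has a neighbour in both $C_u$ and $C_v$.

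The plan is a guess-and-propagate scheme on $S$, with a case split on its structure. A triad-free graph has stable sets of size at most two, so $S$ is either \emph{bisimplicial} (its complement is bipartite) or it contains an odd antihole; in the second case the shortest odd antihole in $S^C$ is an induced odd cycle in the complement. I would handle these two cases differently.

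\emph{Bisimplicial case.} Here $S=K_1\cup K_2$ with $K_1,K_2$ cliques. I would guess a bounded number of anchor vertices: for $i=1,2$, a vertex $a_i\in K_i$ together with neighbours of $a_i$ in $C_u$ and in $C_v$. Then I would reduce to a 2-SAT or a bipartite cut problem. For each vertex $x$, introduce boolean variables recording whether $x\in S$ and, if so, whether $x\in K_1$ or $x\in K_2$. Every non-edge inside $S$ forbids its two ends from lying in the same clique, which is a 2-clause. The separation requirement is harder: "every $u\dd v$ path meets $S$" is not a 2-CNF condition. The idea is to use minimality and the anchors to force $S\subseteq N(P_u)\cap N(P_v)$ for suitable connected sets $P_u\ni u$ and $P_v\ni v$ that we grow greedily. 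Concretely, I would compute the closest separator to $u$ (in the menger/important-separator sense) that is restricted to the candidate set $\{x: x$ is non-adjacent to some guessed witness$\}$, and then verify it.

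\emph{Odd-antihole case.} Here I would follow the strategy sketched in the introduction for Theorem \ref{thm:main}: find connected sets on the two sides whose neighbourhoods contain a common odd antihole. The key structural claim I would aim to prove is that in a triad-free minimal separator containing an odd antihole $A$, every vertex of $S$ is determined by adjacency to a bounded number of vertices of $A$ plus a bounded number of guessed vertices in $C_u\cup C_v$. This should hold because $\alpha\le2$ forces every vertex outside $A$ to be adjacent to all but at most a consecutive pair of the antihole. Granting that claim, we enumerate polynomially many guesses and, for each one, determine the candidate set $S'$. We then check that $S'$ separates $u$ from $v$ and is triad-free; the latter takes $O(n^3)$ time.

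The main obstacle is the separation condition in the bisimplicial case, since $S$ may be large and no bounded guess pins it down directly. My first attempt would be a "closest minimal separator" argument. Let $T=\{x: x$ is adjacent to every guessed vertex in $C_u$ or lies in the guessed cliques$\}$, and take the minimal $u\dd v$ separator within $T$ that is closest to $u$. Moving the separator towards $u$ only shrinks stable sets relative to the anchors, so triad-freeness should be preserved. I would try to prove this exchange property via Lemma \ref{lem:separator-alpha-2}-style arguments, namely by constructing a $K_{2,3}$ induced minor from a hypothetical triad.
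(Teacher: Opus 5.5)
Your proposal has a genuine gap: it gives no working algorithm for either case, and the steps it relies on fail as stated.

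\emph{Bisimplicial case.} The step you call the main obstacle is already solved by Theorem \ref{thm:list-2-clique-cutset}. The instance $(V(G)\setminus\{v\},V(G)\setminus\{u\},V(G)\setminus\{u,v\},V(G)\setminus\{u,v\})$ has a solution exactly when a bisimplicial $u\dd v$ separator exists. The closest-separator principle you propose instead is false. Let $u$ be adjacent to a stable set $\{x_1,x_2,x_3\}$, let each $x_i$ be adjacent to $w$, and let $w$ be adjacent to $v$. Then $\{w\}$ is a triad-free $u\dd v$ separator, but the minimal separator closest to $u$ is the triad $\{x_1,x_2,x_3\}$. So moving towards $u$ does not shrink stable sets.

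\emph{Odd-antihole case.} Your structural claim is unproven, and its justification is reversed. A vertex of $S$ can never miss two consecutive vertices $a_i,a_{i+1}$ of $A$: they are non-adjacent, so together with it they form a triad. Vertices of $C_u\cup C_v$, on the other hand, may miss arbitrarily many vertices of $A$. The correct statement is Lemma \ref{lem:every-s-is-small}: vertices of $S$ are small for a shortest odd antihole of $S$. But small vertices also occur on both sides, so adjacency to $A$ cannot determine $S$.

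The tools from Section \ref{sec:structure} also do not transfer, because here $G$ is arbitrary:
\begin{itemize}
\item Lemma \ref{lem:at-most-6-small} uses that $G$ has no induced $K_{2,3}$.
\item Lemma \ref{lem:long-stretch} uses that $S$ is a shortest odd minimal separator among \emph{all} minimal separators of $G$. Its proof passes to an $a_p\dd a_{p+1}$ separator, which need not separate $u$ from $v$.
\end{itemize}
Finally, even locating $A$ inside $N(C_1)\cap N(C_2)$ would identify only part of $S$. The rest of $S$, the separation condition, and triad-freeness of the union would remain unhandled.

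\emph{The missing idea.} The paper uses an argument that makes your case split unnecessary.
\begin{itemize}
\item By Observation \ref{obs:dominating-non-edge}, every non-edge $e$ of a triad-free $S$ dominates $S$. So for $x$ in an $S$-full component $X$, the component $C(e,x)$ of $G[M(e)\cup\{x\}]$ containing $x$ lies inside $X$.
\item Choosing $C(e,x)$ inclusion-maximal forces its neighbourhood to meet every non-edge of $S$ (Lemma \ref{lem:dominating-subset}).
\item Doing this on both sides gives $C_1\subseteq C_u$ and $C_2\subseteq C_v$ from only $O(n^3)$ candidates. Then $S_0=N(C_1)\cap N(C_2)\subseteq S$, and $S\setminus S_0$ is the union of two cliques (Lemma \ref{lem:two-cliques}).
\end{itemize}
The set $S_0$ may be large and contain all the odd antiholes, but the guess pins it down.

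The remainder of the separator is then found by a list 2-clique cutset instance on $G\setminus S_0$:
\begin{itemize}
\item Vertices $x$ with $\alpha(S_0\cup\{x\})\ge 3$ are barred from the cliques.
\item For each non-adjacent pair $x,y$ with $\alpha(S_0\cup\{x,y\})\ge 3$, a gadget vertex $d_{xy}$ is added, adjacent to everything except $x$ and $y$, and forced into the cliques. This prevents $x$ and $y$ from both entering the separator.
\end{itemize}
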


Note that if a triad-free $u\dd v$ separator exists, then there also exists a minimal triad-free $u\dd v$ separator.
Given a triad-free $u\dd v$ separator $S$, we can make it minimal by repeatedly removing a vertex of $S$ and testing if the remaining set is still a $u\dd v$ separator.

The proof proceeds as follows.
The main structural step behind our algorithm identifies, among polynomially many candidates, a subset of the desired separator $S$ whose removal forces the remainder of $S$ to be bisimplicial.
We then reduce the remaining task to the {\em list 2-clique cutset problem}.
An {\em instance $\mathcal I$ of the list 2-clique cutset problem} is a list $(U,V,P,Q)$ of subsets of $V(G)$ and a {\em solution} to $\mathcal I$ is a partition $(U',V',P',Q')$ of $V(G)$ such that
\begin{itemize}
	\item $U'\subseteq U, V' \subseteq V, P'\subseteq P,Q'\subseteq Q$,
	\item $U'$ is anticomplete to $V'$, and
	\item $P'$ and $Q'$ are both (possibly empty) cliques.
\end{itemize}
It is known that the list 2-clique cutset problem can be solved in polynomial time \cite{list-two-clique-cutset}.

\begin{theorem}[Cameron, Eschen, Ho{\`a}ng and Sritharan \cite{list-two-clique-cutset}]
\label{thm:list-2-clique-cutset}

	The list 2-clique cutset problem admits a polynomial-time algorithm.
	
\end{theorem}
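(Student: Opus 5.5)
The plan is to reduce the problem to polynomially many instances of 2-SAT. Each vertex $x$ gets one of four labels $U,V,P,Q$ from the list of sets containing it. All constraints are binary:
\begin{itemize}
\item an edge $xy$ forbids the label pairs $(U,V)$ and $(V,U)$;
\item a non-edge $xy$ forbids $(P,P)$ and $(Q,Q)$.
\end{itemize}
A 4-valued binary CSP is not directly 2-SAT. The idea is therefore to guess a bounded amount of information that rules out, for every vertex, all but two of its labels. Once each vertex has at most two admissible labels, the problem is 2-SAT: one boolean variable per vertex records which label is used, and each forbidden pair becomes a 2-clause. Since 2-SAT is solvable in linear time, the whole algorithm is polynomial provided there are polynomially many guesses.

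First, for each nonempty part among $P',Q'$, I would guess a representative: $p\in P'$ and $q\in Q'$, or record that the part is empty. This gives $O(n^2)$ branches, and it already cuts the lists, since $P'\subseteq N[p]$ and $Q'\subseteq N[q]$. For the undecided vertices I would then guess extremal choices. For instance, let $p$ be a vertex of $P'$ chosen so that any other clique vertex it fails to see is forced into $Q'$. This should let us argue that a vertex of $N(p)\setminus N(q)$ is never in $Q'$, and a vertex of $N(q)\setminus N(p)$ is never in $P'$. With these restrictions each such vertex has at most three admissible labels; among these, $U$ versus $V$ is still open.

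Second, to break the remaining ambiguity between $U$ and $V$, I would also guess $u\in U'$ and $v\in V'$ when they are nonempty. I would then use connectivity: the components of $G\setminus(P'\cup Q')$ each go entirely into $U'$ or entirely into $V'$. The $U/V$ choice should therefore be determined by a separation argument, not by free guesses.

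The step I expect to be the main obstacle is showing that, after these $O(n^4)$ guesses, every vertex genuinely has only two admissible labels. This is hardest for vertices of $N(p)\cap N(q)$, which a priori may carry any of the four labels. What I would try first is to observe that, once the clique side is fixed to be $P$ or $Q$, the remaining choice for such a vertex is only whether it lies in the separator at all. This is a binary choice, and its consistency with the $U/V$ side of its neighbours can be encoded by implications of the form ``if $x$ is not in the separator, then $x$ has the same side as its non-separator neighbours.'' If that encoding fails to stay within 2-clauses, the fallback is a further guessing step: choose a minimal counterexample edge $xy$ between the two sides, and show that it can be guessed in polynomially many ways so that propagation resolves all remaining three- and four-label vertices.
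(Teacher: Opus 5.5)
The paper does not prove this theorem; it imports it from Cameron, Eschen, Ho\`ang and Sritharan. So your sketch has to stand on its own, and it does not. The step you flag as the main obstacle, showing that polynomially many guesses leave every vertex with at most two admissible labels, is never established, and the tools you offer do not achieve it.

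Guessing $p\in P'$, $q\in Q'$, $u\in U'$, $v\in V'$ removes labels only through adjacency to these four vertices: non-neighbours of $p$ lose $P$, non-neighbours of $q$ lose $Q$, neighbours of $u$ lose $V$, and neighbours of $v$ lose $U$. Hence:
\begin{itemize}
\item a vertex of $N(p)\cap N(q)$ with no neighbour in $\{u,v\}$ keeps all four labels;
\item a vertex of $N(p)\setminus N[q]$ with no neighbour in $\{u,v\}$ keeps $\{U,V,P\}$;
\item nothing in the proposal explains how further bounded guessing would remove these labels.
\end{itemize}
The other steps do not help either. The ``extremal'' choice of $p$ is vacuous: $p$ already sees all of $P'$, and your conclusion about $N(p)\setminus N(q)$ is just $Q'\subseteq N[q]$. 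The claim that the clique side of a vertex in $N(p)\cap N(q)$ becomes fixed is unsupported. The component argument for the $U/V$ choice is circular, because $P'\cup Q'$ is exactly the unknown.

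Your proposed encoding also fails concretely. Take a bit $s_x$ for ``$x$ is in the separator'' and a side bit $b_x$, with $U=(0,0)$, $V=(0,1)$, $P=(1,0)$. An edge $xy$ then imposes $s_x\vee s_y\vee(b_x\leftrightarrow b_y)$. The assignments $(s_x,b_x,s_y,b_y)=(0,1,0,1),(1,0,0,1),(0,0,0,0)$, that is, the label pairs $(V,V)$, $(P,V)$, $(U,U)$, all satisfy this constraint. Their coordinatewise majority is $(0,0,0,1)$, which is the forbidden pair $(U,V)$.

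Every relation definable by 2-clauses, even with auxiliary variables, is closed under majority. So this edge constraint cannot be expressed in 2-SAT, even when every list is $\{U,V,P\}$. In other words, the ``if $x$ is not in the separator then it is on the same side as its neighbours'' implications genuinely leave 2-SAT. The fallback of guessing a minimal counterexample edge and propagating is not specified enough to evaluate.

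What you have is a strategy whose central lemma is missing: a structural reduction of the four-label problem to polynomially many instances with lists of size at most two, or some other argument. That missing reduction is the substance of the cited result, which is why the paper cites it rather than proving it.
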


As a corollary of Theorem \ref{thm:triad-free-cutset}, we obtain a polynomial-time algorithm for the {\em list triad-free cutset problem}.
An {\em instance $\mathcal I$ of the list triad-free cutset problem} is a list $(U,V,S)$ of subsets of $V(G)$ and a {\em solution} to $\mathcal I$ is a partition $(U',V',S')$ of $V(G)$ such that
\begin{itemize}
	\item $U'\subseteq U, V' \subseteq V, S'\subseteq S$,
	\item $U'$ is anticomplete to $V'$, and
	\item $S'$ is triad-free.
\end{itemize}
We think of $U'$ and $V'$ as the two sides of the separation and we think of the list $(U,V,S)$ as the list of allowed vertices in each corresponding set.
Thus, we may prescribe vertices on either side of the separation, as well as vertices that must belong to, or must be excluded from, the cutset.
The goal is to either find such a partition or to determine that none exists.

\begin{restatable}{corollary}{listtriadfree}
\label{cor:list-triad-free-cutset}

	The list triad-free cutset problem admits a polynomial-time algorithm.
	
\end{restatable}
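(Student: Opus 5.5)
We reduce an instance $(U,V,S)$ of the list triad-free cutset problem to Theorem \ref{thm:triad-free-cutset} on an auxiliary graph. First we dispose of the trivial cases. Every vertex of $V(G)\setminus(U\cup V\cup S)$ has no allowed part, so if such a vertex exists we report that no solution exists. If $U'$ or $V'$ may be empty, the solution conditions only require $S'$ to be triad-free. We therefore also check, in polynomial time, whether one side can be empty, for instance by putting every vertex of $V(G)\setminus S$ into $U$ (or into $V$) as its lists allow and taking $S'$ to be the rest. Whether the resulting $S'$ is triad-free is checked by testing all triples. This needs some care, since it is a small list problem in itself, but it is essentially a check that each vertex outside $S$ lies in $U$, or each in $V$, together with an optimal choice for the vertices of $S$.

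For the main case, vertices lying only in $U\setminus(V\cup S)$ must go to $U'$, and similarly for $V$. We build $G'$ from $G$ by adding two new vertices $u$ and $v$. We make $u$ adjacent to every vertex of $V(G)\setminus (V\cup S)$, which is forced into $U'$, and $v$ adjacent to every vertex of $V(G)\setminus(U\cup S)$. Vertices forbidden from $S'$ but allowed on both sides cause no problem. Vertices in $(U\cap S)\setminus V$ are the difficulty: they may go to $U'$ or $S'$ but never to $V'$. For these we make the vertex adjacent to $u$. If it is not placed in the cutset, it lies in $u$'s component, which is exactly what we want. Symmetrically, vertices in $(V\cap S)\setminus U$ are made adjacent to $v$. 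Vertices in $(U\cap V)\setminus S$ must not enter the cutset. We enforce this by replacing each such vertex $x$ by a large set of false twins: three copies of $x$, pairwise nonadjacent and with the same neighbourhood. Any separator containing one copy and not all copies can drop it, and the minimality of a triad-free separator lets us assume all copies are absent, since three pairwise nonadjacent copies would form a triad.

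With $G'$ built, we argue the correspondence in both directions. A solution $(U',V',S')$ with $U',V'$ nonempty gives the $u\dd v$ separator $S'$ in $G'$. Conversely, take a minimal triad-free $u\dd v$ separator $S^*$ in $G'$, which we may produce from Theorem \ref{thm:triad-free-cutset} followed by the minimization described after it. By minimality and the twin argument, $S^*$ contains no vertex outside $S$. We then set $U'$ to be the vertices of $G$ in the component of $u$, set $V'$ to be the rest of $V(G)\setminus S^*$, and check that the lists are respected. A forced-$V$ vertex adjacent to $v$ cannot be in $u$'s component. Every other leftover vertex is in $V$ or in both lists, except for vertices of $U\setminus V$ that are not adjacent to $u$. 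No such vertex exists, because every vertex not in $V$ was made adjacent to $u$.

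The main obstacle is the twin gadget for forbidding cutset vertices. Minimality alone does not exclude a vertex of $V(G)\setminus S$ from $S^*$, since it may be genuinely necessary. With three nonadjacent twins, however, all three would have to be in $S^*$: by minimality either all or none of them are, because twins lie on the same paths. Three nonadjacent twins form a triad, which contradicts triad-freeness. So the expected plan is to verify this twin argument carefully, and to handle the degenerate empty-side cases separately.
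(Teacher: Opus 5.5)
There is a genuine gap at the step ``by minimality and the twin argument, $S^*$ contains no vertex outside $S$.'' Your twin gadget is applied only to $(U\cap V)\setminus S$. A vertex $x\in U\setminus(V\cup S)$ is merely joined to $u$. Adjacency to $u$ guarantees that $x$ lands in $u$'s component \emph{if} $x\notin S^*$, but it does nothing to keep $x$ out of a minimal $u\dd v$ separator.

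Here is a concrete failure. Let $G$ consist of a stable set $\{y_1,y_2,y_3\}$ and two further vertices $x,z$ complete to it. Take $U=\{x,y_1,y_2,y_3\}$, $V=\{z,y_1,y_2,y_3\}$ and $S=\{y_1,y_2,y_3\}$. Any solution has $x\in U'$ and $z\in V'$, which forces every $y_i$ into $S'$, giving a triad; so there is no solution. None of your special cases applies, yet in your $G'$ the set $\{x\}$ is a minimal triad-free $u\dd v$ separator. So the reduction already fails at the decision level.

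Nor can you simply reject when the final list check fails. Take the path $x\dd y\dd z$ with $x$ only in $U$, $z$ only in $V$ and $y\in U\cap V\cap S$. A solution exists, but Theorem~\ref{thm:triad-free-cutset} may return $\{x\}$.

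The repair is to enforce exclusion from the cutset for \emph{every} vertex of $V(G)\setminus S$, not just those in $U\cap V$. Blow each such vertex up into three pairwise nonadjacent copies with the same neighbourhood, so the copies inherit the edges to $u$ or $v$. Then your all-or-none argument does give $S^*\subseteq S$, and the rest of your verification goes through. This would be a legitimate alternative to the paper's gadget.

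The paper's gadget does not rely on minimality. For each $x\notin S$ it adds two nonadjacent vertices $a^1_x,a^2_x$ adjacent to everything except $x$, including both $u$ and $v$. These lie in every $u\dd v$ separator, and $x$ cannot join them without forming a triad.

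Two smaller points:
\begin{enumerate}
\item As written, vertices of $S\setminus(U\cup V)$ are attached to neither $u$ nor $v$. Your final check, however, assumes $N(u)\cap V(G)=V(G)\setminus V$ and $N(v)\cap V(G)=V(G)\setminus U$. Define the neighbourhoods exactly that way, as the paper does.
\item The separate empty-side pre-check is unnecessary. Since $u$ and $v$ are new vertices, a solution with $U'$ or $V'$ empty still makes $S'$ a $u\dd v$ separator in $G'$.
\end{enumerate}
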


We will explain how to derive Corollary \ref{cor:list-triad-free-cutset} in Section \ref{sec:corollaries}.

Additionally, we define two extensions of the list triad-free cutset problem.
The {\em connected list triad-free cutset problem} is defined the same way as the list triad-free cutset problem, except that for a solution $(U',V',S')$, the set $S'$ is additionally required to be either connected or empty.
The {\em minimal list triad-free cutset problem} is defined the same way as the list triad-free cutset problem, except that for a solution $(U',V',S')$, each vertex $s\in S'$ is additionally required to have a neighbor both in $U'$ and $V'$.
We show a sharp boundary in the hardness of these extensions:
While the connected version of the problem is still polynomial-time solvable, the minimal version is NP-hard.

\begin{restatable}{corollary}{connectedlisttriadfree}
\label{cor:conn-list-triad-free-cutset}
	
	The connected list triad-free cutset problem admits a polynomial-time algorithm.
	
\end{restatable}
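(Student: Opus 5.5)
We reduce the connected problem to polynomially many instances of the list triad-free cutset problem and invoke Corollary~\ref{cor:list-triad-free-cutset} on each. The structural fact driving the reduction is simple. If $S'$ is triad-free and $x\in S'$, then $S'\setminus N[x]$ is a clique. Indeed, any two non-adjacent vertices of $S'\setminus N[x]$ would form a triad together with $x$. So a connected triad-free set looks like a vertex $x$, some neighbours of $x$, and a clique $C$ of non-neighbours of $x$. If $C\neq\emptyset$, connectivity gives a vertex $c\in C$ with a neighbour $w\in S'\cap N(x)$.

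The algorithm proceeds in three cases.

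\textbf{Empty cutset.} We first test directly whether the solution with $S'=\emptyset$ works, i.e.\ whether the instance $(U,V,\emptyset)$ has a solution. This is an instance of the list triad-free cutset problem, so Corollary~\ref{cor:list-triad-free-cutset} applies (it can also be checked by a simple component argument).

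\textbf{The case $C=\emptyset$.} For every vertex $x\in S$, we force $x$ into the cutset by passing to the instance $(U\setminus\{x\},V\setminus\{x\},S\cap N[x])$. Since the three parts partition $V(G)$, $x$ must then lie in $S'$.

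\textbf{The case $C\neq\emptyset$.} For every triple $(x,w,c)$ with $x,w,c\in S$, $xw,wc\in E(G)$ and $xc\notin E(G)$, we force $x,w,c$ into the cutset by removing them from $U$ and $V$. We also shrink the cutset list to $S\cap(N[x]\cup N[c])$, and solve the resulting instance.

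In every case we solve the instance with Corollary~\ref{cor:list-triad-free-cutset} and output the first solution found. If all instances fail, we report that no solution exists. There are $O(n^3)$ instances, so the running time is polynomial.

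\textbf{Soundness.} Consider any solution $(U',V',S')$ produced by one of the modified instances. It is also a solution of the original instance, because we only shrank lists. In the case $C=\emptyset$, we have $S'\subseteq N[x]$ with $x\in S'$, so $S'$ is connected. In the case $C\neq\emptyset$, every vertex of $S'$ is either $x$, a neighbour of $x$, $c$, or a neighbour of $c$. Together with the path $c\dd w\dd x$ inside $S'$, this shows $S'$ is connected.

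\textbf{Completeness.} Let $(U',V',S')$ be any solution of the connected problem with $S'\neq\emptyset$, and pick any $x\in S'$. If $S'\subseteq N[x]$, it is a solution of the instance for the case $C=\emptyset$ with this $x$. Otherwise $C=S'\setminus N[x]$ is a nonempty clique by the observation above. Connectivity of $S'$ gives $c\in C$ and $w\in S'\cap N(x)$ with $cw\in E(G)$. Since $C$ is a clique containing $c$, every vertex of $C$ is $c$ or adjacent to $c$. Hence $S'\subseteq N[x]\cup N[c]$, and $(U',V',S')$ solves the instance for the triple $(x,w,c)$.

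The only step requiring an idea is the choice of restriction on the cutset list. Connectivity is a global condition that cannot be encoded by lists. It is captured here by the local fact that in a triad-free set, the non-neighbourhood of any vertex is a clique. This fact lets a bounded number of guessed vertices certify connectivity.
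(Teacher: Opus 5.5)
Your proof is correct and follows essentially the same route as the paper: the paper also forces an induced path $a\dd b\dd c$ of $S$ into the cutset and calls Corollary~\ref{cor:list-triad-free-cutset} on $(U\setminus\{a,b,c\},V\setminus\{a,b,c\},S)$, getting connectivity from the fact that triad-freeness already makes every other vertex of $S'$ adjacent to $a$ or $c$ (so your extra restriction of the cutset list to $S\cap(N[x]\cup N[c])$ is harmless but redundant). The only difference is in the clique case: the paper handles the clique/empty case with a single list 2-clique cutset instance $(U,V,S,\emptyset)$, whereas you guess one vertex $x$, restrict the cutset list to $S\cap N[x]$, and check the empty cutset separately.
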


\begin{restatable}{proposition}{minimallisttriadfree}
\label{pro:minimal-list-triad-free-cutset}
	
	The minimal list triad-free cutset problem is NP-hard, even when the input graph $G$ is the complement of a bipartite graph.
	
\end{restatable}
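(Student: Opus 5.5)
The plan is to reduce from \textsc{Monotone 3-SAT}, the variant of 3-SAT in which every clause consists either only of positive literals or only of negative literals; this variant is NP-complete. The first step is to reformulate the problem when $G$ is co-bipartite. Let $V(G)=X\cup Y$ with $X,Y$ cliques, and let $H=G^C$, which is bipartite with sides $X,Y$. Every subset of $V(G)$ has independence number at most $2$, so the triad-free condition on $S'$ is automatic. The remaining conditions are combinatorial:
\begin{itemize}
\item $U'$ and $V'$ are anticomplete in $G$;
\item every $s\in S'$ has a $G$-neighbour in both $U'$ and $V'$;
\item the list constraints hold.
\end{itemize}
If the lists force some $a\in X$ into $U'$ and some $b\in Y$ into $V'$, then $U'\subseteq X$ and $V'\subseteq Y$, since $X$ and $Y$ are cliques. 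Anticompleteness in $G$ then says exactly that $(U',V')$ is a biclique of $H$. The minimality condition for $s\in S'\cap X$ reduces to ``$s$ has an $H$-non-neighbour in $V'$'', because $a\in U'$ is already a $G$-neighbour of $s$. Symmetrically, for $s\in S'\cap Y$ it reduces to ``$s$ has an $H$-non-neighbour in $U'$''.

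Given a monotone formula with variables $x_1,\dots,x_n$, I build $H$ as follows.
\begin{itemize}
\item $X=\{a\}\cup\{p_i\}\cup\{d_j : C_j \text{ negative}\}$ and $Y=\{b\}\cup\{n_i\}\cup\{c_j : C_j \text{ positive}\}$.
\item $H$ is complete bipartite between $X$ and $Y$, except for these non-edges:
\begin{itemize}
\item $p_in_i$ for every $i$;
\item $c_jp_i$ whenever $x_i\in C_j$;
\item $d_jn_i$ whenever $\bar x_i\in C_j$.
\end{itemize}
\item In particular $a$ is $H$-complete to $Y$ and $b$ is $H$-complete to $X$.
\item The lists are: $a\in U$ only, $b\in V$ only, every $c_j,d_j$ lies only in $S$, and the literal vertices $p_i$ (resp.\ $n_i$) may go to $U$ or $S$ (resp.\ $V$ or $S$).
\end{itemize}

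For correctness, suppose first that $(U',V',S')$ is a solution. The biclique condition forbids $p_i\in U'$ together with $n_i\in V'$. If $p_i\in S'$, it needs an $H$-non-neighbour in $V'$. Its only such non-neighbours are $n_i$ and the vertices $c_j$, and the $c_j$ are forced into $S'$, so $n_i\in V'$; symmetrically for $n_i\in S'$. Hence exactly one of $p_i\in U'$, $n_i\in V'$ holds, and this defines a truth assignment. Each positive clause vertex $c_j\in S'$ needs an $H$-non-neighbour in $U'$, that is, a true variable of $C_j$. Each negative $d_j$ needs some $n_i\in V'$ with $\bar x_i\in C_j$, that is, a false variable of $C_j$. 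So the assignment satisfies the formula.

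Conversely, a satisfying assignment gives the following partition:
\begin{itemize}
\item $U'=\{a\}\cup\{p_i : x_i \text{ true}\}$;
\item $V'=\{b\}\cup\{n_i : x_i \text{ false}\}$;
\item $S'$ is everything else.
\end{itemize}
We then check the conditions. $U'$ and $V'$ are anticomplete in $G$, since $p_i$ and $n_i$ are never both chosen. Every vertex of $S'$ has a $G$-neighbour on its own side via $a$ or $b$. On the opposite side it has one via the partner literal or via a satisfying literal.

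I expect the main obstacle to be the first step, namely pinning down that the minimality condition becomes a one-sided ``maximality'' condition once $a,b$ are forced. Related to this, only cross-side non-adjacencies are available in $H$, because both sides are cliques in $G$. This is why I use the monotone variant of 3-SAT: positive clauses can be read off from $X$ and negative clauses from $Y$. The rest is a routine verification, and the construction is clearly polynomial.
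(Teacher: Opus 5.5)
Your proof is correct, and it takes a genuinely different route from the paper. The paper reduces from the extension minimal bipartite vertex cover problem, which is NP-hard by Casel et al. Given a bipartite graph with sides $A,B$ and a set $X$ that must lie in the cover, it makes $A$ and $B$ cliques, adds $a$ complete to $A$ and $b$ complete to $B$, and uses the lists $(A'\setminus X, B'\setminus X, A\cup B)$. Anticompleteness of $U'$ and $V'$ then says $S'$ is a vertex cover. The ``neighbour on both sides'' condition says it is minimal, because $a$ or $b$ always supplies the same-side neighbour.

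Your key observation is the same mechanism: forcing $a\in U'$ and $b\in V'$ makes the minimality condition one-sided. In fact your instance has exactly the paper's form. Take the bipartite graph on $V(G)\setminus\{a,b\}$ whose edges are the $G$-cross edges $p_in_i$, $c_jp_i$, $d_jn_i$, with forced set $\{c_j\}\cup\{d_j\}$. So your argument is the paper's reduction composed with a direct reduction from Monotone 3-SAT to the extension problem. In that composed reduction:
\begin{itemize}
\item the edge $p_in_i$ forces exactly one of the pair out of the cover;
\item each clause vertex, being forced into the cover, needs an uncovered literal neighbour.
\end{itemize}

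The paper's proof is shorter and makes the correspondence ``minimal cutset $=$ minimal vertex cover containing $X$'' transparent, but it relies on a specialized hardness result. Yours is self-contained from a textbook NP-complete problem, at the cost of verifying the gadgets. You carry that verification out correctly; in particular, the case analysis showing that exactly one of $p_i\in U'$ and $n_i\in V'$ holds is right, because the $c_j$ and $d_j$ are forced into $S'$.
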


\subsection{Outline of the paper}

In Section \ref{sec:definitions}, we introduce some definitions.
Then, in Section \ref{sec:structure}, we show several structural properties of shortest odd minimal separators.
In Section \ref{sec:algorithm}, we use the structural properties to devise the algorithm from Theorem \ref{thm:main}.
In Section \ref{sec:triad-free}, we prove Theorem \ref{thm:triad-free-cutset}.
Finally, in Section \ref{sec:corollaries}, we derive Corollaries \ref{cor:list-triad-free-cutset} and \ref{cor:conn-list-triad-free-cutset}, and Proposition \ref{pro:minimal-list-triad-free-cutset}.

\section{Definitions}
\label{sec:definitions}

Let $G$ be a graph.
For $S\subseteq V(G)$, we define the \textit{neighborhood} of $S$, denoted $N(S)$, to be the set of all vertices in $V(G) \setminus S$ with at least one neighbor in $S$.
By the {\em closed neighborhood of $S$}, denoted $N[S]$, we mean the set $N(S) \cup S$.
We say that a set $X\subseteq V(G)\setminus S$ is {\em $S$-full} or {\em full to $S$} if $S\subseteq N(X)$.
Note that if $S$ is a minimal separator in $G$, then $G\setminus S$ has at least two $S$-full components \cite{algo_graph_theory}.
We define the {\em non-neighborhood} of $S$, denoted $M(S)$, to be the set $V(G) \setminus N[S]$.
Note that a vertex $v$ is not its own non-neighbor.
For a set $A\subseteq V(G)$, we define $N_A(S)$ to be the set $N(S)\cap A$.
Similarly, $M_A(S) = M(S)\cap A$.
For convenience, we will treat a vertex $x$ and the set $\{x\}$ interchangeably.
Analogously, we will treat a pair $uv$ and the set $\{u,v\}$ interchangeably.

Let $A$ be an antihole in $G$.
We write $A=a_1\dd \dots\dd a_l$ to mean that each consecutive pair of vertices $a_i,a_{i+1}$ is non-adjacent.
For convenience, we identify $a_i$ with $a_{i+kl}$ and $a_{i-kl}$ for every $k\in \poi$.
Let $P=p_1\dd \dots \dd p_k$ be a path or an antipath in $G$.
We call the vertices $p_1$ and $p_k$ the \textit{ends} of $P$ and refer to $P \backslash\left\{p_1, p_k\right\}$ as the \textit{interior} of $P$, denoted $P^*$.

Let $S\subseteq V(G)$ be a set of vertices and $v\in V(G)$.
We say that a vertex $v$ is {\em small for $S$} if $v$ has at most 2 non-neighbors in $S$.
Similarly, $v$ is {\em big for $S$} if $v$ has at least 3 non-neighbors in $S$.
The set of vertices of $V(G)\setminus S$ that are big for $S$ is denoted by $B(S)$.
When the set $S$ is clear, we just say that $v$ is {\em small} or {\em big}.
For an antihole $A=a_1\dd \dots\dd a_l$, we define a {\em stretch of $v$} to be an antipath $P\subset A$ with $|P|\ge 2$ such that $v$ is not adjacent to the ends of $P$ and is complete to $P^*$.
We say that a stretch $P$ is {\em even} if $P$ is even, and analogously a stretch $P$ is {\em odd} if $P$ is odd.
A stretch $P$ is {\em long} if $|P|\ge 5$.

\section{The structure of $\mathcal G_2'$}
\label{sec:structure}

Throughout this section, let $G\in \mathcal G_2'$ be a graph, let $S$ be a shortest odd minimal separator in $G$, and let $A=a_1\dd \dots\dd a_l$ be a shortest odd antihole in $S$.

\begin{lemma}
\label{lem:has-even-stretch}

	Every $v\in B(A)$ has an even stretch in $A$.
	
\end{lemma}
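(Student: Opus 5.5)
The plan is a parity count around the antihole; the minimality of $S$ and the $K_{2,3}$-induced-minor-freeness of $G$ should not be needed. Fix $v\in B(A)$ and let $a_{i_1},\dots,a_{i_k}$, with $1\le i_1<\dots<i_k\le l$, be the non-neighbors of $v$ in $A$. Since $v$ is big, $k\ge 3$. Set $i_{k+1}=i_1+l$, using the cyclic indexing convention from Section~\ref{sec:definitions}. For each $t\in\{1,\dots,k\}$ define
\[
P_t=\{a_{i_t},a_{i_t+1},\dots,a_{i_{t+1}}\}.
\]

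First I will check that each $P_t$ is a stretch of $v$. Because $A$ is an antihole, $G^C[A]$ is an induced cycle of length $l$. Since $k\ge 2$, each $P_t$ is a proper consecutive segment of this cycle: it omits at least the vertices strictly between $a_{i_{t+1}}$ and $a_{i_t+l}$ on the other side. A proper consecutive segment of an induced cycle induces a path, so $G^C[P_t]$ is a path, i.e.\ $P_t$ is an antipath with $|P_t|\ge 2$. Its ends $a_{i_t},a_{i_{t+1}}$ are non-neighbors of $v$. Its interior consists exactly of the vertices of $A$ strictly between two consecutive non-neighbors of $v$, so $v$ is complete to $P_t^*$. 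Hence $P_t$ is a stretch of $v$.

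Next comes the parity count. The segments $P_1,\dots,P_k$ cover the edges of the cycle $G^C[A]$ exactly once, so
\[
\sum_{t=1}^k\bigl(|P_t|-1\bigr)=\sum_{t=1}^k (i_{t+1}-i_t)=l .
\]
Since $A$ is an odd antihole, $l$ is odd. Therefore some term $|P_t|-1$ is odd, i.e.\ $|P_t|$ is even, and that $P_t$ is an even stretch of $v$.

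The only step needing care is verifying that each segment $P_t$ really is an induced antipath rather than the whole antihole. This is exactly where having at least two non-neighbors is used, and bigness gives three.
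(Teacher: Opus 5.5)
Your proof is correct and uses the same argument as the paper: cut $A$ into the stretches between cyclically consecutive non-neighbors of $v$, and note that $\sum_t(|P_t|-1)=l$ is odd (the paper phrases this as $\sum_j|A_j|=|A|+k$). One minor imprecision: $k\ge 2$ alone does not make every $P_t$ a proper segment, since if the only non-neighbors are $a_i,a_{i+1}$, the complementary segment is all of $A$; it is really $k\ge 3$ that guarantees each $P_t$ misses a vertex of $A$ and hence is an antipath.
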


\begin{proof}

	Let $M(v)\cap A = \{a_{i_1},\dots,a_{i_k}\}$ where $i_1< i_2 < \dots < i_k$ and note that $k \ge 3$.
	For $1\le j<k$, let $A_j=a_{i_j}\dd a_{i_j+1}\dd \dots \dd a_{i_{j+1}}$ and let $A_k=a_{i_k}\dd a_{i_k+1}\dd \dots \dd a_{i_1}$.
	Note that each $A_j$ is a stretch of $v$.
	Observe that $A=\cup_{i=1}^k A_i$ and that $\sum_{i=1}^k |A_i| = |A|+k$ because every vertex in $M(v)\cap A$ is counted twice.
	Since $|A|$ is odd, it follows that one of the stretches is even.
	This completes the proof.
	
\end{proof}

\begin{lemma}
\label{lem:every-s-is-small}
	
	Every vertex $s\in S$ is small for $A$.
	
\end{lemma}

\begin{proof}
	
	Suppose that $s\in S$ is big.
	Then, by Lemma \ref{lem:has-even-stretch}, $s$ has an even stretch $P$ in $A$.
	Since $\alpha(S) = 2$, $s$ does not have two consecutive non-neighbors in $A$.
	Together with the fact that $s$ is big, it follows that $2 < |P| \le |A|-3$.
	Let $A' = P \cup \{s\}$.
	Then, $A'$ is an odd antihole in $S$ shorter than $A$, which is a contradiction.
	This completes the proof.
	
\end{proof}

\begin{lemma}
\label{lem:at-most-6-small}
	
	Let $L$ be an $S$-full component of $G\setminus S$.
	Then, there exists a subset $L'\subseteq L$ such that $G[L']$ is connected, $L'$ is full to $A$, and such that $L'$ contains at most 6 small vertices.
	
\end{lemma}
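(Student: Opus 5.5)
The plan is to take $L'$ to be an inclusion-minimal connected subset of $L$ that is full to $A$; such a set exists because $L$ is connected and $S$-full, and hence full to $A$. Among all such minimal sets I would choose one with the fewest small vertices, and then suppose for a contradiction that it contains at least $7$ small vertices. The whole argument rests on minimality: for every $x\in L'$, either $G[L'\setminus x]$ is disconnected, or some vertex of $A$ has $x$ as its unique neighbor in $L'$.

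First I would exploit a single small vertex $w\in L'$. It has at most two non-neighbors $c,d$ in $A$. Let $P_c$ be a shortest path in $G[L']$ from $w$ to $N(c)\cap L'$, and define $P_d$ analogously. Then $\{w\}\cup P_c\cup P_d$ is connected and full to $A$, so by minimality it equals $L'$. Since this holds for every small $w$, I get strong shape information. Each small vertex has degree at most $2$ in $G[L']$. Moreover, $G[L']$ is a union of at most two induced paths leaving $w$. If there are at least two small vertices, comparing the decompositions from two different small vertices should force $G[L']$ to be an induced path $p_1\dd\dots\dd p_m$. Its ends are the only vertices of $L'$ adjacent to certain ``private'' vertices of $A$, namely $a^*$ for $p_1$ and $b^*$ for $p_m$. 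Here I would use that the interior vertices of $P_c$ are anticomplete to $c$.

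Next I would pin down the non-neighbors of the interior small vertices. Let $w=p_i$ be small with $1<i<m$. Walking from $p_i$ towards $p_1$ must first hit $N(c)$ at $p_1$, or walking towards $p_m$ must first hit $N(c)$ at $p_m$, and the same holds for $d$. Consequently each interior small vertex misses one vertex $c_i$ whose only neighbors in $L'$ lie on one side of $p_i$, and similarly for $d_i$. Ordering the seven or more small vertices along the path and applying pigeonhole, I expect to find three of them, say $p_{i}, p_{j}, p_{k}$, together with vertices of $A$ that are non-adjacent to long subpaths between them. Alternatively, I expect to find small vertices whose non-neighbor pairs in $A$ coincide or nest.

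The main obstacle is turning this configuration into a contradiction with the standing hypotheses. There are three available: $\alpha(S)\le 2$; $G$ is $K_{2,3}$-induced-minor-free (Lemma \ref{lem:separator-alpha-2}); and $A$ is a shortest odd antihole in a shortest odd minimal separator. What I would try first is to build a $K_{2,3}$ induced minor. For one branch set I would take a second $S$-full component $R$ of $G\setminus S$, which is complete in the contracted sense to every vertex of $A$. For the other I would take a subpath of $L'$. The three middle vertices would be three vertices of $A$ that each have neighbors in that subpath but are pairwise non-adjacent in the minor. Since $A$ is an antihole, non-adjacent pairs are consecutive, so I need three vertices of $A$ that are pairwise non-adjacent; this is impossible in an antihole with $\alpha=2$. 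So instead I would separate the three by contracting disjoint subpaths of $L'$. If that fails, the fallback is to replace a segment of the path by a shorter connected piece. That would yield a minimal full set with fewer small vertices, contradicting the choice of $L'$. The constant $6$ should come out as two small vertices near each end plus slack.
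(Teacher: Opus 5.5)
Your setup is sound: an inclusion-minimal connected $A$-full set $L'\subseteq L$, and the fact that a non-cut vertex of $G[L']$ must be the unique neighbor in $L'$ of some vertex of $A$. But the proposal has a genuine gap: it never reaches a contradiction. The last two paragraphs list things you ``expect'' to find, and your one concrete attempt at a $K_{2,3}$ has its sides reversed. You put three vertices of $A$ on the three-vertex side, which is impossible since $\alpha(A)=2$, as you noticed yourself. The correct orientation uses two consecutive (hence non-adjacent) vertices $a_k,a_{k+1}$ of $A$ as the two-vertex side, and three pairwise non-adjacent small vertices of $L'$ as the other side.

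What makes this work is a fact you almost state but never use. Suppose $G[L']$ is an induced path $p_1\dd\dots\dd p_m$, and $a^*,b^*\in A$ are the private neighbors of $p_1,p_m$. Then every interior vertex is non-adjacent to both $a^*$ and $b^*$. So every small interior vertex satisfies $M_A(p_i)=\{a^*,b^*\}$ exactly: the non-neighbor pairs do not merely ``coincide or nest'', they are all identical. Since $|A|\ge 5$, some consecutive pair $a_k,a_{k+1}$ avoids $\{a^*,b^*\}$, and this pair is complete to all small interior vertices. Five small interior vertices on an induced path contain three pairwise non-adjacent ones (the 1st, 3rd and 5th along the path). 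These give an induced $K_{2,3}$, so there are at most $4$ interior and $2$ end small vertices. No pigeonhole over one-sided non-neighbors $c_i,d_i$, no contraction of subpaths, and no exchange argument is needed.

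Second, your reduction to the path case is asserted (``should force'') rather than proved. The claim that two small vertices already force an induced path does not follow from your decomposition $L'=\{w\}\cup P_c\cup P_d$. For example, nothing in it excludes $G[L']$ being a claw whose three leaves have private neighbors in $A$, with two of those leaves small.

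The paper treats the non-path case directly. If $G[L']$ is not a path, it has three vertices $l_1,l_2,l_3$ whose individual removal leaves it connected. By minimality, each $l_i$ is the unique neighbor in $L'$ of some $a(i)\in A$, and these $a(i)$ are distinct. Every vertex of $L'\setminus\{l_1,l_2,l_3\}$ misses all three of them and is therefore big. So this case has at most three small vertices, and only the path case needs the $K_{2,3}$ argument above.
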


\begin{proof}

	Let $L' \subseteq L$ be connected and full to $A$, such that $|L'|$ is as small as possible.
	Suppose first that $L'$ is a path.
	Let $l_1,l_2$ be the ends of $L'$.
	By the minimality of $|L'|$, for $i \in \{1,2\}$ there exists $a(i) \in A$ such that $l_i$ is the unique neighbor of $a(i)$ in $L'$.
	It follows that for every vertex $v$ in $(L')^*$ that is small for $A$, $M_A(v)\subseteq \{a(1),a(2)\}$.
	Since $A$ is odd, there exist two consecutive vertices $a_k,a_{k+1}$ that are complete to all the small vertices in $(L')^*$.
	Suppose that $(L')^*$ contains at least 5 small vertices.
	Then, 3 of them form a stable set which together with $\{a_k,a_{k+1}\}$ forms a $K_{2,3}$ in $G$, contrary to the fact that $G\in \mathcal G_2'$.
	Thus, $L'$ contains at most 6 small vertices as required.

	Now, we may assume that $L'$ is not a path, and so there exist three distinct vertices $l_1,l_2,l_3 \in L'$ such that for each $i$, $L' \setminus l_i$ is connected.
	By the minimality of $|L'|$, for each $i$, there is $a(i) \in A$ such that $l_i$ is the unique neighbor of $a(i)$ in $L'$.
	It follows that the vertices $a(1)$, $a(2)$ and $a(3)$ are all distinct and every vertex of $L' \setminus \{l_1,l_2,l_3\}$ is anticomplete to $\{a(1), a(2),a(3)\}$.
	Consequently, no vertex of $L' \setminus \{l_1,l_2,l_3\}$ is small, and the assertion of the lemma holds.

\end{proof}

If $A$ were known, we could enumerate sets of at most six small vertices and take components of the graph induced by each such set together with $B(A)$.
By Lemmas \ref{lem:every-s-is-small} and \ref{lem:at-most-6-small}, one of these guesses yields a component that is full to $A$ and lies entirely in an $S$-full component of $G\setminus S$.
The next two lemmas allow us to search through polynomially many candidates for either a shortest odd antihole of $S$ or a subset of $A$ of size at most 22 that preserves all big vertices in an $S$-full component.

\begin{lemma}
\label{lem:long-stretch}

	Suppose that $b\in B(A)$ has a long stretch $P=\{a_i,\dots,a_j\}$ in $A$ where $i<j$.
	Then, every other stretch of $b$ has exactly 3 vertices.

\end{lemma}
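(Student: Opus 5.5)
The plan is to argue by contradiction. Let $Q$ be a stretch of $b$ other than $P$, with ends $a_k,a_m$, and suppose $|Q|\neq 3$. Since $P$ and $Q$ are distinct stretches, the interior $Q^*$ is disjoint from $P$, and $Q$ can share at most an end with $P$. Moreover, $b$ is big, so it has a non-neighbor in $A$ outside $\{a_i,a_j\}$. I will use two facts throughout. First, in an antihole, $a_s$ is adjacent to every $a_t$ with $t\notin\{s-1,s,s+1\}$. Second, $G\in\mathcal G_2'$ is $K_{2,3}$-induced-minor-free by Lemma \ref{lem:separator-alpha-2}. The case split is $|Q|=2$ versus $|Q|\ge 4$.

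\emph{Case $|Q|=2$.} Here $Q=\{a_k,a_{k+1}\}$ are consecutive non-neighbors of $b$, so $\{b,a_k,a_{k+1}\}$ is a stable set. Take $a_{i+1},a_{i+2}\in P^*$. They are non-adjacent to each other, both are adjacent to $b$ because $b$ is complete to $P^*$, and both are adjacent to $a_k,a_{k+1}$. The last point holds because $|P|\ge 5$ forces $a_k,a_{k+1}\notin\{a_i,\dots,a_{i+3}\}$: the only possible overlap is $a_k=a_j$ or $a_{k+1}=a_i$, and $j\ge i+4$, while $j+1\neq i$ since $b$ is big. This gives an induced $K_{2,3}$ with sides $\{a_{i+1},a_{i+2}\}$ and $\{b,a_k,a_{k+1}\}$, a contradiction.

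\emph{Case $|Q|\ge 4$.} A stable triple inside $A\cup\{b\}$ is no longer available, since $a_k$ and $a_m$ are adjacent. So I will build a $K_{2,3}$ induced minor instead. The two branch sets of the ``2'' side will be $\{b\}$ and a connected set $C$. For $C$ I take a subset of an $S$-full component not containing $b$ that is full to $A$, which exists by the minimality of $S$ as in Lemma \ref{lem:at-most-6-small}. Then $C$ is anticomplete to $b$. Since $b\notin S$ this component of $G\setminus S$ must be a different component from the one containing $b$ (if $b\in S$ it would already contradict Lemma \ref{lem:every-s-is-small}). For the three pairwise anticomplete branch sets of the ``3'' side, touching both $b$ and $C$, the natural candidates are $\{a_{i+1}\}$, $\{a_{i+2}\}$, and a connected set built from $Q^*$ together with vertices near $a_k,a_m$. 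The trouble is that $Q^*$ is adjacent to $P^*$, so this third set must be chosen more cleverly. The fallback is to use parity instead. If $|Q|$ is even and $|Q|\ge 4$, then $Q\cup\{b\}$ is an odd antihole, and since $|Q|\le |A|-|P|+2\le |A|-3$ it is shorter than $A$. If $|Q|$ is odd, then I would apply the same reasoning to $P$ instead, replacing $P$ by $P\cup\{b\}$. The task then becomes exhibiting a minimal separator containing this antihole, for instance by moving $b$ into a modified separator $S'$ built from $S$ and $N(b)$. That would contradict the choice of $S$ as a shortest odd minimal separator.

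The main obstacle is this last case. Either I must find an odd minimal separator containing the shorter antihole, which requires controlling where $b$ and its neighbors sit relative to the $S$-full components, or I must find the correct connected branch sets for the $K_{2,3}$ induced minor. I would try the $K_{2,3}$ construction first, using vertices of the component containing $b$ as connectors, and resort to the separator modification only if that fails.
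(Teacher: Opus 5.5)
Your case $|Q|=2$ matches the paper's first step, but it has a small slip. If $Q=\{a_{i-1},a_i\}$ (the overlap $a_{k+1}=a_i$ you mention but do not handle), then $a_{i+1}$ is non-adjacent to $a_{k+1}=a_i$, so your pair fails; use $a_{i+2},a_{i+3}$ instead.

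The genuine gap is the case $|Q|\ge 4$, which you leave open, and your preferred $K_{2,3}$ route cannot close it. Take a $9$-antihole $A=a_1\dd\dots\dd a_9$ and add vertices $b,\ell,r$ with $N(b)=(A\setminus\{a_1,a_5,a_8\})\cup\{\ell\}$, $N(\ell)=A\cup\{b\}$ and $N(r)=A$. Then:
\begin{itemize}
\item $\alpha(G)=2$, so $G$ has no $K_{2,3}$ induced minor, since a model needs three pairwise anticomplete branch sets;
\item $S=A$ is a minimal $\ell\dd r$ separator and $A$ is its shortest odd antihole;
\item yet $b$ has stretches of sizes $5$, $4$ and $3$.
\end{itemize}
The only hypothesis this example violates is that $S$ is a \emph{shortest} odd minimal separator. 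Indeed, every $a_2\dd a_3$ separator contains $N(a_2)\cap N(a_3)\supseteq\{b,a_5,a_6,a_7,a_8\}$, which is a $5$-antihole. So the contradiction has to come from some minimal separator unrelated to $S$ or $N(b)$. Your plan of modifying $S$ using $N(b)$ does not point to one.

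The missing idea is the following. Let $E$ be an even stretch of $b$ and $P'\neq E$ a long stretch.
\begin{itemize}
\item By the first case, $|E|\ge 4$.
\item Since $b$ is big, $E$ and $P'$ share at most one end, so $|E|\le |A|-|P'|+1$. Hence $E\cup\{b\}$ is an odd antihole shorter than $A$.
\item Since $|(P')^*|\ge 3$, $(P')^*$ contains two consecutive vertices $a_p,a_{p+1}$ that avoid the vertex next to a shared end. These two vertices are non-adjacent to each other and complete to $E\cup\{b\}$.
\item Because $a_p,a_{p+1}$ are non-adjacent, a minimal $a_p\dd a_{p+1}$ separator exists. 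It must contain $N(a_p)\cap N(a_{p+1})\supseteq E\cup\{b\}$, which contradicts the choice of $S$.
\end{itemize}

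Your parity split also needs repair. If $|P|$ and $|Q|$ are both odd, neither of them yields an odd antihole. Take $E$ from Lemma \ref{lem:has-even-stretch} instead. If $|Q|$ is even, use $E=Q$ and $P'=P$. If $|Q|\ge 5$ is odd, then $P$ and $Q$ are both long, and one of them differs from $E$ and can serve as $P'$.

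The paper packages this by choosing $E$ to be a shortest even stretch. It then deduces $E=P$ and excludes $|Q|=4$ by the minimality of $E$.
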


\begin{proof}
	
	First, suppose that $b$ is anticomplete to two consecutive vertices $a_k,a_{k+1}$.
	Note that in that case $X=\{b,a_k,a_{k+1}\}$ is a stable set of size 3.
	Since $b$ is big, $|X\cap P|\le 1$.
	Hence, there exist $a_p,a_{p+1}\in P\setminus X$ that are complete to $X$.
	This contradicts $G$ being $K_{2,3}$-free.

	Thus, $b$ is adjacent to at least one of $a_k,a_{k+1}$ for every $k \in \poi$.
	By Lemma \ref{lem:has-even-stretch}, $b$ has an even stretch in $A$.
	Let $E=\{a_q,\dots,a_w\}$ be a shortest even stretch of $b$ in $A$ and let $A' = E\cup \{b\}$.
	Suppose that $b$ has a long stretch $P'$ in $A\setminus E^*$.
	Since $b$ is big for $A$, $A'$ is shorter than $A$ and $|E\cap P'|\le 1$.
	Hence, there exist $a_p,a_{p+1}\in P'\setminus E$ that are complete to $A'$.
	Then, there exists a minimal $a_p\dd a_{p+1}$-separator $S'$ in $G$ that contains $A'$, contrary to $S$ being a shortest odd minimal separator in $G$.
	It follows that $E=P$ and by minimality of $E$ every stretch of $b$ in $A\setminus P^*$ has size at most 3.
	Since $b$ has no two consecutive non-neighbors in $A$, every other stretch has size exactly three.
	This completes the proof.
	
\end{proof}

\begin{lemma}
\label{lem:cover-big-vertices}
	
	Let $B\subseteq B(A)$.
	Then, one of the following holds:
	\begin{enumerate}
		\item There exist two vertices $b_1,b_2\in B$ such that $A\subseteq N(\{b_1,b_2\})$.
		\item There exists a set $A'\subseteq A$ of size at most 22 such that $B\subseteq B(A')$.
	\end{enumerate}
	
\end{lemma}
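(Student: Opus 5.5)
The plan is to exploit the very rigid shape of the non-neighbourhood in $A$ of a big vertex, which Lemma \ref{lem:long-stretch} provides. If $|A|\le 22$, take $A'=A$ and outcome (2) holds trivially, so assume $|A|>22$. Every stretch has at least 2 vertices. Hence for a vertex $b\in B$ without a long stretch, consecutive non-neighbours of $b$ along $A$ are at most 3 apart, so every 3 consecutive vertices of $A$ contain a non-neighbour of $b$. Consequently every window $W$ of 9 consecutive vertices of $A$ contains at least 3 non-neighbours of $b$, and $b\in B(W)$. If $b$ has a long stretch $P$, Lemma \ref{lem:long-stretch} says that every other stretch of $b$ has exactly 3 vertices. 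So on $A\setminus P^*$ the non-neighbours of $b$ are exactly every second vertex, starting and ending at the ends of $P$. In particular, any window of 9 consecutive vertices disjoint from $P^*$ contains at least 4 non-neighbours of $b$, and so does any 5 consecutive vertices of $A\setminus P^*$.

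First case: no vertex of $B$ has a long stretch. Then a single window of 9 consecutive vertices serves as $A'$. Otherwise, fix $b_1\in B$ with a long stretch $P_1=a_i\dd\dots\dd a_j$. We know $N(b_1)\cap A=P_1^*\cup\{\text{every second vertex of }A\setminus P_1\}$. Also $M(b_1)\cap A$ consists of $a_i,a_j$ together with the vertices $a_{j+2},a_{j+4},\dots$ of $A\setminus P_1$. If some $b_2\in B$ is adjacent to all of $M(b_1)\cap A$, outcome (1) holds. So assume every $b\in B$ has a non-neighbour in $M(b_1)\cap A$. We build $A'$ as the union of at most two windows of 9 consecutive vertices plus the (at most 4) ends of the relevant long stretches, for a total of at most 22. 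Choose $W_1$ to be 9 consecutive vertices of $A\setminus P_1^*$ adjacent to one end of $P_1$, say starting at $a_j$, and choose $W_2$ to be 9 consecutive vertices starting at $a_i$ going the other way; if $A\setminus P_1^*$ is short, these overlap and we use fewer vertices. Every $b$ without a long stretch is handled by $W_1$. A vertex $b$ with a long stretch $P$ such that $P^*$ misses $W_1$ or $W_2$ is handled by that window.

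The remaining vertices $b$ have a long stretch $P$ whose interior meets both $W_1$ and $W_2$. Since $W_1, W_2$ sit on either side of the gap $A\setminus P_1^*$ and $P$ is an antipath segment of $A$, $P^*$ must either cover $A\setminus P_1^*$ almost entirely or run through $P_1^*$. In the first subcase, $b$ is adjacent to all of $A\setminus P_1^*$ except near its ends. Then the union $N(b)\cup N(b_1)$ covers $A$ unless the two ends of $P$ are non-neighbours of $b_1$; this parity/overlap check is where I would use the assumption that $b$ has a non-neighbour in $M(b_1)$. In the second subcase, the ends of $P$ lie in $A\setminus P_1^*$ near $W_1\cup W_2$, and the alternating pattern of $b$ on $A\setminus P^*\supseteq P_1^*$ gives 3 non-neighbours in $W_1\cup W_2\cup\{\text{ends}\}$. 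If that fails we add the ends of $P$ to $A'$, which is where the extra 4 vertices of the budget come from.

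The main obstacle is this last case analysis. Two vertices with overlapping long stretches must be shown either to dominate $A$ together, or to leave 3 non-neighbours inside a bounded set fixed in advance; this set must not depend on $b$. I would first try to show, using $K_{2,3}$-freeness exactly as in the first paragraph of the proof of Lemma \ref{lem:long-stretch}, that two long stretches of different vertices of $B$ whose interiors both meet $W_1$ and $W_2$ force the non-neighbours to have matching parity. This would reduce everything to one extra pair of ends. If that fails, I would replace $b_1$ by a vertex of $B$ whose long stretch is inclusion-maximal and redo the choice of windows.
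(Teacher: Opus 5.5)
There is a genuine gap, and it sits in the case you yourself call the main obstacle. You never produce a set $A'$, independent of $b$, that works for vertices whose long stretch meets both windows. Worse, the set you propose (two windows inside $A\setminus P_1^*$ plus ends of long stretches) is insufficient. Let $b_1$ have long stretch $P_1=a_i\dd\dots\dd a_j$, necessarily of even size by Lemma \ref{lem:long-stretch} and parity. Let $b\in B$ have non-neighbours in $A$ exactly $a_{i-1},a_{i+1},a_{i+3},\dots,a_{j-2},a_j$. Then $b$ has the long stretch $a_j\dd a_{j+1}\dd\dots\dd a_{i-1}$, and all its other stretches have size 3, so this is consistent with everything you use (e.g.\ $|A|=41$, $i=1$, $j=10$). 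Since $a_j\in M(b)\cap M(b_1)$, $b$ passes your test, and $\{b_1,b\}$ does not give outcome (1). However, $W_1=\{a_j,\dots,a_{j+8}\}$ contains only the non-neighbour $a_j$ of $b$, and $W_2=\{a_{i-8},\dots,a_i\}$ contains only $a_{i-1}$. The ends of both long stretches ($a_i,a_j,a_{i-1}$) already lie in $W_1\cup W_2$. So $b\notin B(A')$: its remaining non-neighbours lie in $P_1^*$, which your $A'$ never touches. Re-choosing the reference vertex (inclusion-maximal or even longest) does not help while the windows stay outside the interior of its stretch. With $b$ as reference, $b_1$ has only the two non-neighbours $a_i,a_j$ in $A\setminus\{a_{j+1},\dots,a_{i-2}\}$. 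Two smaller slips: five consecutive vertices of $A\setminus P^*$ contain at most 3 non-neighbours, not 4. Also, $N(b)\cup N(b_1)$ already fails to cover $A$ when \emph{one} end of $P$ is a non-neighbour of $b_1$, as in this example.

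The missing idea is to take a \emph{longest} stretch, test only the six vertices at its ends, and place the windows astride those ends. Let $b_1\in B$ have a longest stretch $a_i\dd\dots\dd a_j$. Suppose some $b'\in B$ is complete to $\{a_{i-1},a_i,a_{i+1},a_{j-1},a_j,a_{j+1}\}$. Then each triple lies in the interior of a stretch of $b'$ of size at least 5, so both lie in the unique long stretch $Q$ of $b'$ (Lemma \ref{lem:long-stretch}). If $Q$ runs through $a_i\dd\dots\dd a_j$, it is longer than the stretch of $b_1$, a contradiction. Otherwise $Q^*\supseteq A\setminus\{a_{i+2},\dots,a_{j-2}\}$, so $A\subseteq N(\{b_1,b'\})$. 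If no such $b'$ exists, every $b'\in B$ has a non-neighbour $a_k$ with $k\in\{i-1,i,i+1,j-1,j,j+1\}$. Since $b'$ has at most one long stretch and all its other stretches have size 3 (or all its stretches have size at most 4), it has three non-neighbours among $a_{k-4},\dots,a_{k+4}$. Hence $A'=\{a_{i-5},\dots,a_{i+5},a_{j-5},\dots,a_{j+5}\}$, of size at most 22, gives outcome (2). This is the paper's argument.
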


\begin{proof}

	We may assume $B$ is non-empty.
	Let $b \in B$ be a vertex with the longest stretch in $A$, say $\{a_i,\ldots,a_j\}$ where $i<j$.
	Suppose that some vertex $b' \in B$ is complete to $\{a_{i-1},a_i,a_{i+1},a_{j-1},a_j,a_{j+1}\}$.
	By Lemma \ref{lem:long-stretch}, it follows that either $b'$ has a longer stretch than $b$, which is a contradiction, or outcome (1) holds.

	Therefore, we may assume that every vertex $b' \in B$ is non-adjacent to one of $a_{i-1},a_i,a_{i+1},\allowbreak a_{j-1},a_j,a_{j+1}$.
	Let $A'=\{a_{i-5},\ldots,a_{i+5},\allowbreak a_{j-5}, \ldots,a_{j+5}\}$.
	We show that every $b'\in B$ is big for $A'$.
	If $b'$ has a long stretch, then this follows from Lemma \ref{lem:long-stretch}.
	Otherwise, every stretch of $b'$ is of size at most $4$, and hence $b'$ is also big for $A'$.
	Thus, outcome (2) holds.
	This completes the proof.
	
\end{proof}

\section{Finding a non-bisimplicial minimal separator}
\label{sec:algorithm}

We will use the following two results from \cite{shortest_odd_hole} and \cite{separators}:

\begin{theorem}[Chudnovsky, Scott, Seymour \cite{shortest_odd_hole}]
\label{thm:find-shortest-odd}
	
	There exists a polynomial-time algorithm that finds a shortest odd hole in a graph $G$ or determines that none exists.
	
\end{theorem}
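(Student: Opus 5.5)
This result is imported from \cite{shortest_odd_hole} and is only used as a black box here. If I had to prove it, I would follow the ``guess, clean, and take shortest paths'' strategy that the earlier sections of this paper mimic for antiholes. Let $C$ be a shortest odd hole in $G$, of length $\ell$. We may assume $\ell\geq 7$, since holes of length $5$ can be found by brute force. Call a vertex $v\notin C$ \emph{$C$-major} if $N(v)\cap C$ is not contained in a three-vertex subpath of $C$. The algorithm has three stages: first make $C$ clean, then guess a bounded number of vertices on $C$, then reconstruct a hole of length $\ell$ from shortest paths between the guessed vertices.

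\emph{Step 1: structure of major vertices.} Using the minimality of $\ell$, I would show that each $C$-major vertex $v$ has $N(v)\cap C$ of a very restricted shape. The argument is parity-based, in the spirit of Lemma \ref{lem:has-even-stretch}. Every subpath of $C$ between consecutive neighbours of $v$ closes, together with $v$, into a hole. One of these holes is odd, because $\ell$ is odd. That odd hole is shorter than $C$ unless $v$ has very few neighbours on $C$. From this I expect to derive that every $C$-major vertex has at most a bounded number of non-neighbours in $C$, or else is adjacent to all but a short stretch of $C$. In either case, a bounded set $Y\subseteq C$ of ``witness'' vertices certifies majorness, in analogy with Lemma \ref{lem:cover-big-vertices}.

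\emph{Step 2: cleaning.} Next I would show that there is a polynomial-size family of sets $X\subseteq V(G)$ such that some $X$ in the family contains every $C$-major vertex and is disjoint from $C$. To build it, guess the bounded witness set $Y\subseteq C$ together with a few auxiliary vertices. Then let $X$ be the set of vertices that have the ``major pattern'' on $Y$, with the neighbours of a guessed vertex added or removed as needed. The difficulty is to guarantee $X\cap C=\emptyset$, since a vertex of $C$ may itself look major on $Y$. For this I would use the structure from Step~1, exactly as the paper guesses a subset $A'$ of size at most $22$ that keeps all big vertices big.

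\emph{Step 3: shortest paths in the cleaned graph.} For each candidate $X$, work in $G\setminus X$, where $C$ is clean. Guess three vertices $c_1,c_2,c_3$ of $C$ splitting it into three arcs $P_1,P_2,P_3$ of roughly equal length, together with their neighbours on $C$. In $G\setminus X$, with the neighbourhoods of the other guessed vertices deleted, replace each arc $P_i$ by a shortest path $Q_i$ between the same ends. The claim is that $Q_1\cup Q_2\cup Q_3$ is again a hole of length $\ell$. Since $|Q_i|\leq|P_i|$, the total length is at most $\ell$ and the parity is correct, so it remains to show that the union is induced. If there were an edge between $Q_i$ and $Q_j$, I would reroute to obtain a shorter odd hole, or else find a vertex of $Q_i$ that is $C$-major. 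Both are excluded, by the minimality of $\ell$ and by cleanness respectively. Over all guesses, the algorithm outputs the shortest odd hole found.

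I expect the main obstacle to be the cleaning step, together with the inducedness argument in Step 3. Showing that a single edge between two of the shortest paths forces either a shorter odd hole or a major vertex requires a delicate case analysis on where the two ends of that edge sit relative to $c_1,c_2,c_3$. This case analysis is the technical core of \cite{shortest_odd_hole}.
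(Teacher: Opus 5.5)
The paper gives no proof of this statement: it imports it from \cite{shortest_odd_hole} as a black box, which is what your first sentence says. The sketch you add, however, does not work as written, and there are two genuine gaps.

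\textbf{Step~1 is false.} Let $G$ consist of an odd hole $C$ with vertices $c_0,\dots,c_{\ell-1}$ in cyclic order, $\ell\ge 7$, together with one vertex $v$ adjacent exactly to $c_0,c_1,c_2,c_3$. The only hole through $v$ is $v$ plus the path $c_3c_4\cdots c_{\ell-1}c_0$, which has even length $\ell-1$. So $C$ is still a shortest odd hole and $v$ is $C$-major. Yet $v$ has $\ell-4$ non-neighbours on $C$ and is adjacent to only four consecutive vertices of it.

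Your parity argument also misfires, because a gap between consecutive neighbours of $v$ may be a single edge, which closes into a triangle rather than a hole. What parity actually gives is only this: every odd gap of a $C$-major vertex is a single edge, i.e.\ every $C$-major vertex is complete to some edge of $C$. (Note that $N(c)\cap N(c')$ is disjoint from $C$ for every edge $cc'$ of $C$, so covering by such sets would settle $X\cap C=\emptyset$.) But this gives no control over \emph{where} that edge lies. Hence no bounded witness set $Y\subseteq C$ can be read off from the shape of individual major vertices.

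This is exactly where the analogy with Lemma~\ref{lem:cover-big-vertices} breaks. There, the hypotheses of Section~\ref{sec:structure} force, via Lemma~\ref{lem:long-stretch}, the non-neighbours of each big vertex to be dense in $A$ outside at most one long stretch, so two windows of $11$ vertices suffice. For odd holes in arbitrary graphs there is no such density. The cleaning step has to exploit how different major vertices interact with one another and with $C$, and your sketch has no substitute for this.

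\textbf{Step~3 has a second, independent gap.} From $|Q_i|\le|P_i|$ you conclude that ``the parity is correct''. But if some $|Q_i|<|P_i|$, then $Q_1\cup Q_2\cup Q_3$, even if it is a hole, is shorter than $\ell$ and hence even by the minimality of $\ell$. So you must show $|Q_i|=|P_i|$: in the graph where you compute $Q_i$, the arc $P_i$ must itself be a shortest path between its ends.

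This does not follow from minimality alone. A shorter $Q_i$ of the wrong parity combines with $P_i$ into a closed walk of odd length less than $\ell$. That guarantees an odd cycle, but possibly only a triangle, not an odd hole. Ruling this out, together with the inducedness of the union, is part of the cleanness argument.

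Your three-stage architecture (clean, guess boundedly many vertices of $C$, replace arcs by shortest paths) does match the cited work. Its substance, however, lies in precisely these two steps. Your sketch gives an incorrect argument for the first and no argument for the second.
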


\begin{lemma}[Dallard, Milanič, Štorgel \cite{separators}]
\label{lem:subgraph-separator}
	
	Let $G$ be a graph, let $G'$ be an induced subgraph of $G$, and let $S'$ be a minimal separator in $G'$.
	Then there exists a minimal separator $S$ in $G$ such that $S' \subseteq S$.
	
\end{lemma}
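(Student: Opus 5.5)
The plan is to build the separator $S$ in $G$ directly from two full components of $G'\setminus S'$. We will choose a large $u\dd v$ separator in $G$ that contains $S'$, shrink it to an inclusion-minimal one, and then show that the shrinking step can never delete a vertex of $S'$.

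First, since $S'$ is a minimal separator in $G'$, the graph $G'\setminus S'$ has at least two $S'$-full components $C_1$ and $C_2$; this is the fact recalled in Section \ref{sec:definitions}. Fix $u\in C_1$ and $v\in C_2$. Define
\[T = S'\cup (V(G)\setminus V(G')).\]
Then $G\setminus T = G'\setminus S'$, and in this graph $u$ and $v$ lie in different components. Hence $T$ is a $u\dd v$ separator in $G$, and $u,v\notin T$.

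Next, let $S\subseteq T$ be an inclusion-minimal $u\dd v$ separator contained in $T$. It exists because $T$ is finite and is itself a $u\dd v$ separator. Any proper subset of $S$ that is a $u\dd v$ separator would also be contained in $T$, contradicting the choice of $S$. So no proper subset of $S$ is a $u\dd v$ separator, and $S$ is a minimal separator in $G$ in the sense of the definition.

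It remains to show $S'\subseteq S$, which is the only step with content. Take $s\in S'$. Because $C_1$ and $C_2$ are $S'$-full, $s$ has a neighbor $c_1\in C_1$ and a neighbor $c_2\in C_2$. Since $G[C_1]$ and $G[C_2]$ are connected, there is a $u\dd c_1$ path inside $C_1$ and a $c_2\dd v$ path inside $C_2$. Joining these through $s$ gives a walk from $u$ to $v$ that uses only vertices of $C_1\cup C_2\cup\{s\}$; shortcutting it yields a $u\dd v$ path $R$ in $G$ with $R\subseteq C_1\cup C_2\cup\{s\}$. The sets $C_1$ and $C_2$ lie in $V(G')\setminus S'$ and are therefore disjoint from $T$, so $R\cap T=\{s\}$. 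Consequently $T\setminus\{s\}$ is not a $u\dd v$ separator, and neither is any subset of it. Since $S\subseteq T$ is a $u\dd v$ separator, $S$ must contain $s$. This gives $S'\subseteq S$ and completes the argument.

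I do not expect a real obstacle here. The only care needed is to choose $u$ and $v$ inside two distinct full components, so that every vertex of $S'$ is indispensable for separating them within $T$.
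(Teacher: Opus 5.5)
Your proof is correct and complete. Since $G\setminus T=G'\setminus S'$, the set $T=S'\cup(V(G)\setminus V(G'))$ is a $u\dd v$ separator in $G$, and any inclusion-minimal $u\dd v$ separator $S\subseteq T$ is a minimal separator of $G$. For each $s\in S'$, the $u\dd v$ path inside $C_1\cup\{s\}\cup C_2$ avoids $T\setminus\{s\}$, and this forces $s\in S$.

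The paper does not prove this lemma; it imports it from \cite{separators}, so there is no in-paper argument to compare against. Your construction in fact gives the slightly stronger conclusion $S\cap V(G')=S'$, because $S\subseteq T$ and $T\cap V(G')=S'$.
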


We can now prove Theorem \ref{thm:main}.

\main*

\begin{proof}
	
	First, we describe the algorithm.
	
	Applying the algorithm from Theorem \ref{thm:k23-im}, we decide if $G$ is $K_{2,3}$-induced-minor-free.
	If not, we output ``$G\notin \mathcal G_2$''.
	Next, let $\mathcal A$ be the set of subsets of $V(G)$ of size at most 22.
	For each 4-tuple $(l_1,l_2,r_1,r_2)$ of vertices of $G$, add a shortest odd antihole $A$ of $N(\{l_1,l_2\})\cap N(\{r_1,r_2\})$, if one exists, to $\mathcal A$.
	This is possible by Theorem \ref{thm:find-shortest-odd}.
	For each $A\in \mathcal A$ and each set $L\subseteq V(G)\setminus A$ of size at most 6 do the following:
	\begin{enumerate}
		\item Mark each vertex of $V(G)\setminus A$ as big or small for $A$.
		\item For each component $C_1$ of $G[B(A)\cup L]$ and each component $C_2$ of $G\setminus N[C_1]$, test whether $N(C_1)\cap N(C_2)$ contains an odd antihole and if so, output ``$G\notin \mathcal G_2$''.
	\end{enumerate}
	If the algorithm has not yet output ``$G\notin \mathcal G_2$'', then output ``$G\in \mathcal G_2$''.
	
	\
	
	{\em Proof of correctness:}
	
	\
	
	It is clear that the algorithm runs in polynomial time.
	By Lemma \ref{lem:separator-alpha-2}, if $G\in \mathcal G_2$ then $G$ is $K_{2,3}$-induced-minor-free.
	Thus, if the algorithm detects a $K_{2,3}$ induced minor, the output ``$G\notin \mathcal G_2$'' is correct.
	Note also that every other time our algorithm outputs ``$G\notin \mathcal G_2$'', it finds an odd minimal separator in an induced subgraph of $G$.
	Lemma \ref{lem:subgraph-separator} implies that the output ``$G\notin \mathcal G_2$'' is always correct.
	Therefore, it suffices to show that if $G$ is $K_{2,3}$-induced-minor-free and $G\notin \mathcal G_2$, then our algorithm outputs ``$G\notin \mathcal G_2$''.
	
	Thus, suppose that $G$ is $K_{2,3}$-induced-minor-free but not in $\mathcal G_2$.
	Then, there exists a minimal separator $S$ in $G$ that is not bisimplicial.
	This means that $G[S]^C$ is not bipartite and therefore contains an odd cycle.
	Let $C$ be a shortest odd induced cycle in $G[S]^C$.
	As $\alpha(S)\le 2$, $G[S]^C$ is triangle-free and therefore $C$ is an odd hole.
	Thus, $G$ contains an odd minimal separator.
	Our goal now is to detect an odd minimal separator in $G$.
	
	Let $\tilde S$ be a shortest odd minimal separator in $G$, and let $\tilde L,\tilde R$ be two $\tilde S$-full components of $G\setminus \tilde S$.
	We claim that:
	
	\sta{\label{sta:core-for-big}
	There exists a set $A\in \mathcal A$ and a shortest odd antihole $\tilde A\supseteq A$ in $\tilde S$ such that either $\tilde L \cap B(\tilde A)\subseteq B(A)$ or $\tilde R \cap B(\tilde A)\subseteq B(A)$.}
	
	Suppose that there exist $l_1,l_2\in \tilde L$ and $r_1,r_2\in \tilde R$ such that $X=N(\{l_1,l_2\})\cap N(\{r_1,r_2\})$ contains a shortest odd antihole of $\tilde S$.
	Then, our algorithm will detect such a 4-tuple $(l_1,l_2,r_1,r_2)$ and will add a shortest odd antihole $A$ of $X$ to $\mathcal A$.
	Note that $A$ is then also a shortest odd antihole of $\tilde S$.
	Taking $\tilde A=A$, the claim follows.
	Otherwise, by possibly interchanging $\tilde L$ and $\tilde R$, we may assume that there exists a shortest odd antihole $\tilde A$ of $\tilde S$ such that $\tilde A\not \subseteq N(\{l_1,l_2\})$ for every pair $l_1,l_2\in \tilde L$.
	Then, Lemma \ref{lem:cover-big-vertices} implies that there exists a set $A\subseteq \tilde A$ of size at most 22, such that $\tilde L \cap B(\tilde A)\subseteq B(A)$.
	This completes the proof of \eqref{sta:core-for-big}.
	
	Let $A\in \mathcal A$ be a set and $\tilde A\supseteq A$ be a shortest odd antihole in $\tilde S$ as in \eqref{sta:core-for-big}.
	By possibly interchanging $\tilde L$ and $\tilde R$, we may assume that $\tilde L \cap B(\tilde A)\subseteq B(A)$.
	By Lemma \ref{lem:every-s-is-small}, every $s\in \tilde S$ is small for $\tilde A$ and hence also for $A$.
	Therefore, $B(A)$ is disjoint from $\tilde S$.
	Lemma \ref{lem:at-most-6-small} implies that in some iteration, our algorithm will find a connected set $C_1\subseteq \tilde L$ that is full to $\tilde A$.
	Hence, it will also find $C_2\supseteq \tilde R$ such that $\tilde A\subseteq N(C_1)\cap N(C_2)$ and will output ``$G\notin \mathcal G_2$'' as required.
	This completes the proof of correctness.
	
\end{proof}

\section{Finding a triad-free separator}
\label{sec:triad-free}

For a graph $G$, let $\overline E(G)=\{uv \mid u,v\in V(G) \text{ such that $u$ is not adjacent to $v$}\}$ be the set of {\em non-edges} of $G$.

\begin{observation}
\label{obs:dominating-non-edge}

	Let $S\subseteq V(G)$ be a set with $\alpha(S)=2$.
	Then, for every $e\in \overline E(S)$, $S \subseteq N[e]$.
	In other words, $M(e)$ is disjoint from $S$.
	
\end{observation}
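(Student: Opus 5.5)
The observation follows directly from the definition of $\alpha$, so I will argue by contradiction rather than invoke any earlier lemma. Fix $e = xy \in \overline E(S)$, so $x,y \in S$ are distinct and non-adjacent. I will show that every $s \in S$ lies in $N[e] = N(e)\cup\{x,y\}$.

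If $s \in \{x,y\}$, then $s \in N[e]$ by definition of the closed neighborhood. So take $s \in S\setminus\{x,y\}$ and suppose $s \notin N(e)$. Then $s$ is adjacent to neither $x$ nor $y$. Together with $xy \notin E(G)$, this makes $\{s,x,y\}$ a stable set of size $3$ in $G[S]$. That gives $\alpha(S)\ge 3$, contradicting $\alpha(S)=2$. Hence $s\in N(e)$, and so $S\subseteq N[e]$.

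For the reformulation, recall that $M(e)=V(G)\setminus N[e]$. The inclusion $S\subseteq N[e]$ is then exactly the statement that $M(e)\cap S=\emptyset$.

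The only point needing care is the convention that a vertex is not its own non-neighbor, which places $x$ and $y$ themselves in $N[e]$ rather than in $M(e)$. The argument in fact uses only $\alpha(S)\le 2$, so it applies equally to any triad-free $S$ containing a non-edge.
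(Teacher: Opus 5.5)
Your proof is correct and is exactly the one-line reasoning the paper leaves implicit: it states this observation without proof, relying on the fact that a vertex of $S$ outside $N[e]$ would form a triad with the non-adjacent ends of $e$. Your remark that only $\alpha(S)\le 2$ is used is accurate. It matches how the observation is applied later, to the triad-free separator $\tilde S$ in Lemma~\ref{lem:dominating-subset}.
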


In the following lemma, we show how to find a set $A$ that contains a neighbor of at least one endpoint of every non-edge in a minimal triad-free separator $S$.
Then, in Lemma \ref{lem:two-cliques}, we show how to use such sets $A$ to reduce the problem to finding a bisimplicial separator.

\begin{lemma}
\label{lem:dominating-subset}
	
	Let $S$ be a minimal separator in $G$ with $\alpha(S)=2$ and let $X$ be an $S$-full component of $G\setminus S$.
	Then, there exists a non-edge $e\in \overline E(S)$, a vertex $x\in X$, and a component $X'$ of $G[M(e)\cup \{x\}]$ such that $X'\subseteq X$ and $N(X') \cap e' \neq \emptyset$ for every $e'\in \overline E(S)$.
	
\end{lemma}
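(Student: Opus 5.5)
The plan is an extremal argument: among all admissible triples, take one whose component $X'$ is as large as possible, and show that a non-edge missed by $N(X')$ would produce a strictly larger admissible component.

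\textbf{Admissible triples.} Call a triple $(e,x,X')$ \emph{admissible} if $e\in\overline E(S)$, $x\in X$, and $X'$ is the component of $G[M(e)\cup\{x\}]$ containing $x$. Admissible triples exist: $\overline E(S)\neq\emptyset$ because $\alpha(S)=2$, and $X\neq\emptyset$.

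\textbf{Step 1: $X'\subseteq X$ for every admissible triple.} By Observation \ref{obs:dominating-non-edge}, $M(e)\cap S=\emptyset$. Also $x\in X$, so $x\notin S$. Hence $X'$ is a connected subset of $G\setminus S$ containing $x$. It therefore lies in the component of $G\setminus S$ containing $x$, which is $X$.

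\textbf{Step 2: the extremal choice.} Among all admissible triples, choose $(e,x,X')$ with $|X'|$ maximum. Suppose, for a contradiction, that some $e'=\{t_1,t_2\}\in\overline E(S)$ satisfies $N(X')\cap e'=\emptyset$. Since $X'\cap S=\emptyset$ and $t_1,t_2\in S$, this means $X'$ is anticomplete to $\{t_1,t_2\}$. Hence $X'\subseteq M(e')$, including the vertex $x$ itself. As $X'$ is connected, it is contained in a single component $Y$ of $G[M(e')]$. By Observation \ref{obs:dominating-non-edge} again, $Y\cap S=\emptyset$, so $Y\subseteq X$.

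\textbf{Step 3: enlarging $X'$.} Since $X$ is $S$-full, some vertex of $X$ is adjacent to $t_1$. That vertex is not in $M(e')$, so $Y\subsetneq X$. Because $X$ is connected, there is a vertex $x'\in X\setminus Y$ with a neighbor in $Y$.
\begin{itemize}
\item The vertex $x'$ is not in $M(e')$: otherwise it would lie in the component $Y$.
\item In $G[M(e')\cup\{x'\}]$, the component containing $x'$ therefore contains $Y\cup\{x'\}$, and in fact equals it.
\end{itemize}
So $(e',x',Y\cup\{x'\})$ is an admissible triple. Its component satisfies $X'\subseteq Y\subsetneq Y\cup\{x'\}$, so it is strictly larger than $X'$, contradicting the maximality of $|X'|$.

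Hence $N(X')\cap e'\neq\emptyset$ for every $e'\in\overline E(S)$, and $X'\subseteq X$ by Step 1, which proves the lemma.

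The step needing the most care is Step 3: one has to check that the new vertex $x'$ really lies outside $Y$ and outside $M(e')$. Both facts come from $Y$ being a full component of $G[M(e')]$ and from $X$ being $S$-full.
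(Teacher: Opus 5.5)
Your proof is correct and follows essentially the same extremal argument as the paper, which takes $C(e,x)$ inclusion-maximal rather than of maximum size and picks $x'$ directly in $X\cap N(C(e,x))$ rather than adjacent to the component $Y$ of $G[M(e')]$. One small slip: the component of $G[M(e')\cup\{x'\}]$ containing $x'$ need not equal $Y\cup\{x'\}$, because $x'$ may have neighbors in several components of $G[M(e')]$; the new admissible triple should therefore use that full component, which still contains $Y\cup\{x'\}$ and so is strictly larger than $X'$, and the contradiction stands.
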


\begin{proof}
	
	For $e\in \overline E(S)$ and $x\in X$, let $C(e,x)$ denote the component of $G[M(e)\cup \{x\}]$ containing $x$.
	Choose $e\in \overline E(S)$ and $x\in X$ such that $C(e,x)$ is maximal, i.e., $C(e,x)\not \subset C(e',x')$ for all $e'\in \overline E(S),x'\in X$.
	Suppose that there exists $e'\in \overline E(S)$ such that $N(C(e,x))\cap e' = \emptyset$.
	Then, since $X$ is connected and full to $S$, there exists a vertex $x' \in X \cap N(C(e,x))$.
	But then $C(e,x)\subset C(e',x')$, which is a contradiction.
	Finally, Observation \ref{obs:dominating-non-edge} implies that $M(e)$ is disjoint from $S$, and hence $C(e,x) \subseteq X$ as required.
	
\end{proof}

\begin{lemma}
\label{lem:two-cliques}
	
	Let $S\subseteq V(G)$ be a set and let $X_1,X_2\subseteq V(G)\setminus S$ be such that for each $i\in\{1,2\}$ and every $e\in \overline E(S)$, $N(X_i)\cap e \neq \emptyset$.
	Then, $S\setminus(N(X_1)\cap N(X_2))$ is bisimplicial.
	
\end{lemma}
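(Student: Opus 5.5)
Set $T=S\setminus(N(X_1)\cap N(X_2))$. The plan is to write $T=T_1\cup T_2$ with $T_i=T\setminus N(X_i)=S\setminus N(X_i)$, and to show that each $T_i$ is a clique. This is clearly a decomposition of $T$: any vertex of $T$ fails to lie in $N(X_1)\cap N(X_2)$, so it misses $N(X_1)$ or $N(X_2)$, and hence lies in $T_1$ or $T_2$. Once both pieces are cliques, $T$ is the union of at most two cliques, which is exactly bisimplicial.

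To see that $T_i$ is a clique, suppose two distinct vertices $s,t\in T_i$ are non-adjacent. Then $e=st$ belongs to $\overline E(S)$, since both ends lie in $S$. The hypothesis on $X_i$ gives $N(X_i)\cap e\neq\emptyset$. However, neither $s$ nor $t$ is in $N(X_i)$, because both belong to $T_i=S\setminus N(X_i)$. This is a contradiction, so every pair of vertices of $T_i$ is adjacent.

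There is no real obstacle here. The only point needing care is a technicality: $N(X_i)$ is defined as a subset of $V(G)\setminus X_i$. Since $X_i\cap S=\emptyset$, membership of a vertex of $S$ in $N(X_i)$ simply means it has a neighbor in $X_i$, so the argument goes through as written. Note also that no triad-freeness of $S$ is needed; the hypothesis on non-edges alone forces the two cliques.
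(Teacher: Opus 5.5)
Your proposal is correct and is essentially the paper's proof. The paper takes the disjoint pieces $P=(S\cap N(X_1))\setminus N(X_2)$ and $Q=S\setminus(S'\cup P)$, where $S'=N(X_1)\cap N(X_2)$. These are your $T_2$ and $T_1$ made disjoint, and the paper rules out a non-edge inside either piece with exactly your argument.
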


\begin{proof}
	
	Define \begin{itemize}
		\item $S' = N(X_1)\cap N(X_2)$,
		\item $P = (S \cap N(X_1)) \setminus N(X_2)$, and
		\item $Q = S \setminus (S'\cup P)$.
	\end{itemize}
	We claim that $P$ and $Q$ are both cliques.
	Suppose that $e\in \overline E(S)$ is contained in $P$.
	Then, $N(X_2)\cap e = \emptyset$, which is a contradiction.
	Otherwise, suppose that $e$ is contained in $Q$.
	Observe that $S\cap N(X_1) \subseteq S'\cup P$.
	Since $N(X_1) \cap e \neq \emptyset$ and $Q$ is disjoint from $S'\cup P$, the claim follows.
	It is straightforward to observe that $P\cup Q = S\setminus S'$.
	
\end{proof}

We are now ready to prove Theorem \ref{thm:triad-free-cutset}.
The intuition behind our algorithm is to first remove a subset $S'$ of the desired separator $S$ to make it bisimplicial, so we can reduce to the list 2-clique cutset problem.
We then create an auxiliary graph by adding some gadget vertices (the set $D$ in the algorithm description) that together with the lists of allowed vertices ensure that the two cliques found can be combined with $S'$ while still remaining triad-free.

\triadfree*

\begin{proof}
	
	First, we describe the algorithm.
	
	For $e\in \overline E(G)$ and $x\in V(G)$, let $C(e,x)$ denote the component of $G[M(e)\cup \{x\}]$ containing $x$.
	Let $\mathcal C = \{\emptyset\}\cup \{C(e,x)\mid e\in \overline E(G), x\in V(G)\}$.
	For every pair $C_1,C_2\in \mathcal C$, define $S=N(C_1)\cap N(C_2)$ and do the following.
	Let \[D = \{d_{xy}\mid xy\in \overline E(G\setminus S) \text{ such that } \alpha(S\cup \{x,y\})\ge 3 \}\] be a set of new vertices.
	Define a new graph $G'$ with vertex set $(V(G)\setminus S)\cup D$ and edges as follows:
	\begin{itemize}
		\item $G'[V(G)\setminus S]=G\setminus S$, and
		\item $N(d_{xy}) = V(G')\setminus \{x,y,d_{xy}\}$ for every $d_{xy}\in D$.
	\end{itemize}
	We create an instance $\mathcal I=(U,V,P,Q)$ of the list 2-clique cutset problem for $G'$ in the following way.
	Let $X=\{x\in V(G) \mid \alpha(S\cup \{x\})\ge 3\}$ and set
	\begin{itemize}
		\item $U = V(G')\setminus (\{v\} \cup D)$,
		\item $V = V(G')\setminus (\{u\} \cup D)$,
		\item $P = Q = V(G')\setminus (X\cup \{u,v\})$.
	\end{itemize}
	Suppose that $\mathcal I$ admits a solution $(U',V',P',Q')$.
	Then, set $S'= (S\cup P'\cup Q')\cap V(G)$.
	Check if $S'$ is a triad-free $u\dd v$ separator in $G$, and if so then output it.
	If after all iterations no triad-free $u\dd v$ separator in $G$ has been found, then output ``no solution exists''.
	
	\
	
	{\it Proof of correctness:}
	
	\
	
	Computing the set $\mathcal C$ takes polynomial time since there are at most $O(|V(G)|^3)$ pairs $(e,x)$ for which we need to compute $C(e,x)$.
	By Theorem \ref{thm:list-2-clique-cutset}, constructing an instance $\mathcal I$ and solving it also takes polynomial time.
	Lastly, checking that $S'$ is a triad-free $u\dd v$ separator can also be done in polynomial time.
	
	It is clear that if our algorithm outputs a set $S'$, then it is a triad-free $u\dd v$ separator.
	Hence, it suffices to show that if there exists a triad-free $u\dd v$ separator, then our algorithm will find it.
	
	Thus, suppose that there exists a triad-free $u\dd v$ separator.
	Note that then there exists a minimal triad-free $u\dd v$ separator $\tilde S$.
	Suppose that $\tilde S$ is a clique.
	Then, for $C_1=C_2=\emptyset$ our algorithm will set $S=D=\emptyset$, meaning $G'=G$.
	Thus, the created instance $\mathcal I$ of the list 2-clique cutset problem will admit a solution $(\tilde U,\tilde V,\tilde S,\emptyset)$ for some sets $\tilde U,\tilde V$ with $u\in \tilde U,v\in \tilde V$.
	By Theorem \ref{thm:list-2-clique-cutset}, the list 2-clique cutset algorithm will find a solution $(U',V',P',Q')$ to $\mathcal I$ and thus a bisimplicial $u\dd v$ separator $P'\cup Q'$.
	
	Therefore, we may assume that $\alpha(\tilde S)=2$.
	Let $\tilde U,\tilde V$ be the $\tilde S$-full components of $G\setminus \tilde S$ containing $u$ and $v$, respectively.
	Let $\tilde U'$ be the union of all components of $G\setminus (\tilde S\cup \tilde V)$.
	By Lemma \ref{lem:dominating-subset}, $\mathcal C$ contains sets $C_1\subseteq \tilde U$ and $C_2\subseteq \tilde V$ such that for each $C_i$ and every non-edge $e\in \overline E(\tilde S)$, $N(C_i)\cap e\neq \emptyset$.
	Let $S=N(C_1)\cap N(C_2)$.
	By Lemma \ref{lem:two-cliques}, we get that $\tilde S\setminus S$ is bisimplicial.
	Observe that $D$ is a clique in $G'$ such that each $d\in D$ has at most one non-neighbor in $\tilde S\setminus S$.
	Thus, the created instance $\mathcal I$ of the list 2-clique cutset problem admits a solution $(\tilde U',\tilde V, \tilde P, \tilde Q)$ for some cliques $\tilde P,\tilde Q$ such that $\tilde P \cup \tilde Q = (\tilde S\setminus S)\cup D$.
	Let $(U',V',P',Q')$ be a solution to $\mathcal I$ that our algorithm found.
	By the way we defined the instance, we get that $u\in U',v\in V'$ and that $U'$ is anticomplete to $V'$.
	It follows that $S'=(S\cup P'\cup Q')\cap V(G)$ is a $u\dd v$ separator in $G$.
	Now, suppose that $S'$ contains a triad $\{a,b,c\}$.
	Since $P'$ and $Q'$ are both cliques and are disjoint from $X=\{x\in V(G) \mid \alpha(S\cup \{x\})\ge 3\}$, it follows that (after possibly relabeling $a,b,c$) $a\in S,b\in P',c\in Q'$.
	Note that $D\subseteq P'\cup Q'$ follows by our definition of $\mathcal I$.
	But then $\{d_{bc},b,c\}$ is a triad in $P'\cup Q'$, which is a contradiction.
	It follows that $S'$ is a triad-free $u\dd v$ separator in $G$.
	This completes the proof.
	
\end{proof}

\section{Corollaries of Theorem \ref{thm:triad-free-cutset}}
\label{sec:corollaries}

\listtriadfree*

\begin{proof}
	
	We will show a reduction to the problem of finding a $u\dd v$ triad-free separator.
	Let $G$ be a graph and let $\mathcal I=(U,V,S)$ be an instance of the list triad-free cutset problem.
	Let $X=V(G)\setminus S$ and let $A=\{a^1_x,a^2_x\mid x\in X\}$.
	Obtain $G'$ from $G$ by adding the vertices $\{u,v\}\cup A$ such that
	\begin{itemize}
		\item $N(u)=V(G')\setminus (V\cup \{u,v\})$,
		\item $N(v)=V(G')\setminus (U\cup \{u,v\})$, and
		\item $N(a^1_x)=N(a^2_x)=V(G')\setminus \{x,a^1_x,a^2_x\}$ for every $x\in X$.
	\end{itemize}
	Use the algorithm from Theorem \ref{thm:triad-free-cutset} to find a triad-free $u\dd v$ separator $S'$ in $G'$.
	If none exists, output ``no solution exists''.
	Otherwise, let $U'$ be the component of $G'\setminus S'$ containing $u$, let $V'$ be the union of all the other components of $G'\setminus S'$, and output $(U'\cap V(G),V'\cap V(G),S'\cap V(G))$.
	
	\
	
	{\it Proof of correctness:}
	
	\
	
	We show that $\mathcal I$ admits a solution if and only if there exists a triad-free $u\dd v$ separator in $G'$.
	
	Suppose that $\mathcal I$ admits a solution $(\tilde U,\tilde V,\tilde S)$.
	Then, $\{u\}\cup \tilde U$ is anticomplete to $\{v\}\cup \tilde V$ in $G'$.
	Let $A$ be as defined by our algorithm and note that since $\tilde S\subseteq S$, we have that $\tilde S\cup A$ is triad-free.
	Thus, $\tilde S\cup A$ is a triad-free $u\dd v$ separator in $G'$.
	
	For the other direction, suppose that there exists a triad-free $u\dd v$ separator $S'$ in $G'$.
	Define $U',V'$ as in the algorithm description.
	It is straightforward to check that $(U'\cap V(G),V'\cap V(G),S'\cap V(G))$ is a partition of $V(G)$.
	Since $U'$ and $V'$ consist of disjoint components of $G'\setminus S'$, we have that $U'\cap V(G)$ is anticomplete to $V'\cap V(G)$.
	Observe that $A\subseteq S'$ since both $u$ and $v$ are complete to it.
	It follows that $S'$ cannot contain a vertex $x$ from $X=V(G)\setminus S$ because then $\{a^1_x,a^2_x,x\}$ would form a triad, which is a contradiction.
	Hence, from the definitions of $N(u)$ and $N(v)$, it follows that $U'\cap V(G)\subseteq U,V'\cap V(G)\subseteq V$ and $S'\cap V(G)\subseteq S$.
	Lastly, $S'\cap V(G)$ is triad-free since $S'$ is triad-free.
	Therefore, $(U'\cap V(G),V'\cap V(G),S'\cap V(G))$ is a solution to $\mathcal I$.
	This completes the proof.
	
\end{proof}

\connectedlisttriadfree*

\begin{proof}
	
	Let $G$ be a graph and let $\mathcal I=(U,V,S)$ be an instance of the connected list triad-free cutset problem.
	First, create an instance $\mathcal I'=(U,V,S,\emptyset)$ of the list 2-clique cutset problem.
	If $\mathcal I'$ admits a solution $(U',V',S',\emptyset)$, then output $(U',V',S')$.
	Otherwise, for each of the induced paths $a\dd b\dd c$ in $S$ do the following.
	Create an instance $\mathcal I_{abc}=(U\setminus \{a,b,c\},V\setminus \{a,b,c\},S)$ of the list triad-free cutset problem.
	If $\mathcal I_{abc}$ admits a solution for some path $a\dd b \dd c$, then output it.
	Otherwise, output ``no solution exists''.
	
	\
	
	{\it Proof of correctness:}
	
	\
	
	Suppose that $\mathcal I$ admits a solution $(\tilde U, \tilde V,\tilde S)$.
	If $\tilde S$ is a clique, then $\mathcal I'$ admits a solution.
	It is also clear that every solution to $\mathcal I'$ is a solution to $\mathcal I$.
	
	Thus, we may assume that $\tilde S$ is not a clique.
	Since $\tilde S$ is connected, there exists an induced path $a\dd b \dd c$ in $\tilde S$.
	Therefore, $\mathcal I_{abc}$ admits a solution.
	For the converse, let $(U',V',S')$ be a solution to $\mathcal I_{abc}$.
	Observe that $\{a,b,c\} \subseteq S'$ and note that because $S'$ is triad-free, every vertex $s\in S'\setminus \{a,c\}$ is adjacent to either $a$ or $c$.
	Thus, $S'$ is connected, and therefore every solution to $\mathcal I_{abc}$ is also a solution to $\mathcal I$.
	This completes the proof.
	
\end{proof}

For the proof of Proposition \ref{pro:minimal-list-triad-free-cutset}, we will need the following definitions and Theorem \ref{thm:list-vertex-cover-np-hard}.
A {\em vertex cover} of a graph $G$ is a set of vertices $C$ such that every edge of $G$ has an endpoint in $C$.
A {\em minimal} vertex cover is a vertex cover $C$ such that no proper subset of $C$ is a vertex cover.
The {\em extension minimal bipartite vertex cover problem} takes as input a bipartite graph $G$ and a vertex set $X$ and asks to determine whether $G$ has a minimal vertex cover $C$ with $X\subseteq C$.

\begin{theorem}[Casel, Fernau, Ghadikolaei, Monnot, Sikora \cite{list-vertex-cover-np-hard}]
\label{thm:list-vertex-cover-np-hard}

	The extension minimal bipartite vertex cover problem is NP-hard.
	
\end{theorem}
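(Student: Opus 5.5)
Since $C$ is a minimal vertex cover of $G$ exactly when $V(G)\setminus C$ is a maximal independent set, I would first restate the problem. $G$ has a minimal vertex cover $C\supseteq X$ if and only if there is an independent set $I\subseteq V(G)\setminus X$ that dominates $X$, meaning every vertex of $X$ has a neighbor in $I$.

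Indeed, if $C$ is such a cover, then $I=V(G)\setminus C$ is maximal independent and avoids $X$. By maximality, every $x\in X$ has a neighbor in $I$. Conversely, given such an $I$, extend it greedily to a maximal independent set $I^*$. No vertex of $X$ is ever added, since each already has a neighbor in $I$. Hence $V(G)\setminus I^*$ is a minimal vertex cover containing $X$.

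It therefore suffices to show NP-hardness of: given a bipartite $G$ and $X\subseteq V(G)$, decide whether some independent set in $V(G)\setminus X$ dominates $X$. I would reduce from 3-SAT.

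For each variable $x$ I would build the following gadget.
\begin{itemize}
\item Left vertices $t_x,f_x$.
\item Right vertices $p_x,q_x$, with edges $t_xp_x$ and $f_xq_x$.
\item A left vertex $r_x\in X$ adjacent to $p_x$ and $q_x$.
\end{itemize}
For each clause $c$, add a right vertex $c\in X$ adjacent to $t_x$ for each positive literal $x$ in $c$, and to $f_x$ for each negative literal $\neg x$ in $c$. The graph is bipartite with sides $\{t_x,f_x,r_x\}$ and $\{p_x,q_x\}\cup\{\text{clauses}\}$. Moreover, $X=\{r_x\}\cup\{\text{clauses}\}$.

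Given a satisfying assignment, let $I$ contain, for each variable $x$:
\begin{itemize}
\item $t_x$ and $q_x$ if $x$ is true;
\item $f_x$ and $p_x$ if $x$ is false.
\end{itemize}
This $I$ is independent and disjoint from $X$. It dominates each $r_x$ through $p_x$ or $q_x$, and each clause through a true literal vertex.

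Conversely, suppose $I$ is independent, avoids $X$ and dominates $X$. Since $r_x$ must be dominated, $I$ contains $p_x$ or $q_x$. Independence then excludes $t_x$ or $f_x$ respectively, so $I$ never contains both $t_x$ and $f_x$. Setting $x$ true iff $t_x\in I$ is a well-defined assignment. Every clause vertex has a neighbor in $I$, which is a literal vertex $t_x$ or $f_x$ consistent with this assignment, so every clause is satisfied.

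The main obstacle is keeping the graph bipartite while still forcing $t_x$ and $f_x$ to be mutually exclusive. They must lie on the same side so that clause vertices can see both, so they cannot simply be joined by an edge. The gadget above gets around this indirectly: the $X$-vertex $r_x$ forces one of $p_x,q_x$ into $I$, and that vertex blocks $t_x$ or $f_x$. Once this is in place, the rest is routine verification, and the reduction is clearly polynomial.
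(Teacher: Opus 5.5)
Your proof is correct. The paper does not prove this statement: it imports it from Casel, Fernau, Ghadikolaei, Monnot and Sikora and uses it only as the source problem in the reduction for Proposition~\ref{pro:minimal-list-triad-free-cutset}. So your self-contained reduction from 3-SAT is a genuine alternative to a citation, not a variant of an argument in the paper.

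Both of its steps check out.

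\begin{itemize}
\item The reformulation is sound. $C$ is a minimal vertex cover exactly when $V(G)\setminus C$ is a maximal independent set. A greedy extension of an independent set $I\subseteq V(G)\setminus X$ that dominates $X$ can never absorb a vertex of $X$, because each such vertex already has a neighbor in $I$.
\item The gadget works. Since $r_x\in X$ has neighbors only $p_x,q_x$, one of them lies in $I$, which excludes $t_x$ or $f_x$ respectively. Hence $t_x,f_x$ are never both in $I$, and a clause vertex dominated through $f_y$ correctly forces $y$ to be false. The bipartition you list is valid, and the construction has linear size.
\end{itemize}

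Nothing in the argument uses that clauses have exactly three literals, so you are in effect reducing from CNF-SAT.

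As for what each route buys: the citation is shorter. Your route makes the NP-hardness in Proposition~\ref{pro:minimal-list-triad-free-cutset} independent of an external result. That reduction accepts an arbitrary bipartite graph with an arbitrary prescribed set $X$, so plain bipartite hardness, which is exactly what you prove, is all that is needed there.
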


\minimallisttriadfree*

\begin{proof}
	
	We show a reduction from the extension minimal bipartite vertex cover problem to the minimal list triad-free cutset problem.
	Let $G$ and $X\subseteq V(G)$ be an instance of the extension minimal bipartite vertex cover problem.
	Let $(A,B)$ be a bipartition of $G$.
	Obtain the graph $G'$ from $G$ by making $A$ and $B$ cliques and adding the vertices $a$ and $b$ such that $N(a)=A$ and $N(b)=B$.
	Note that $G'$ is the complement of a bipartite graph, so in particular $V(G)$ is bisimplicial.
	Let $A'=A\cup\{a\}$ and $B'=B\cup \{b\}$.
	Create an instance $\mathcal I=(A'\setminus X,B'\setminus X,A\cup B)$ of the minimal list triad-free cutset problem.
	
	Let $C$ be a minimal vertex cover of $G$ with $X\subseteq C$.
	We claim that $\mathcal S=(A'\setminus C,B'\setminus C,C)$ is a solution to $\mathcal I$.
	Let $\mathcal I'=\mathcal I$ be an instance of the list triad-free cutset problem.
	It is straightforward to check that $\mathcal S$ is a solution to $\mathcal I'$.
	To show the minimality of $\mathcal S$, let $c\in C$ be a vertex and suppose that $c\in A$.
	Then, $c$ is adjacent to $a\in A'\setminus C$ by construction.
	Since $C$ is a minimal vertex cover of $G$, it also follows that there exists a vertex $b'\in B\setminus C$ such that $c$ is adjacent to it.
	The case of $c\in B$ follows analogously.
	
	For the converse, let $(A'\setminus S,B'\setminus S,S)$ be a solution to $\mathcal I$.
	It is clear that $X\subseteq S$ and that $S$ is a vertex cover of $G$.
	Since $N(a)=A$ and $N(b)=B$, it follows that for every $s\in S$ there exists a vertex in $(A\cup B)\setminus S$ that is adjacent to $s$.
	Thus, $S$ is a minimal vertex cover of $G$.
	This completes the proof.
	
\end{proof}

\section{Acknowledgments}

We thank Malory Marin and Nicolas Trotignon for their involvement in the early stages of the work on Theorem \ref{thm:triad-free-cutset}.

\section{AI Disclosure}

The algorithm from Theorem \ref{thm:triad-free-cutset} was developed in a conversation with GPT-6 Astra.
The write-up is entirely due to the authors.

\bibliographystyle{plain}
\bibliography{ref}

\end{document}